\title{Average degrees of edge-chromatic critical graphs}
\author{Yan Cao$^a$,
 Guantao Chen$^a$, 
 Suyun Jiang$^b$\thanks{Partially supported by NSFC of China (Nos. 11671232, 11571096, 61373019, 11671186).}, 
 Huiqing Liu$^{c*}$, 
 Fuliang Lu$^{d*}$
 \unskip\\[.5em]
{\small $^a$  Department of Mathematics and Statistics, Georgia State University, Atlanta, GA 30303}\\
{\small $^b$  School of Mathematics, Shandong University, Jinan, 250100}\\
{\small $^c$  Faculty of Mathematics and Statistics, Hubei University, Wuhan 430062}\\
{\small $^d$ School of  Mathematics and Statistics, Linyi University, Linyi, Shandong 276000}\\
}
\date{}
\newtheorem{lem}{Lemma}
\newtheorem{thm}{Theorem}
\newtheorem{conj}{Conjecture}
\newtheorem{cla}{Claim}[section]
\newcommand{\D}{\Delta}
\newcommand{\avd}{\overline{d}}
\newcommand{\phiv}{\varphi}
\newcommand{\phibar}{\bar{\varphi}}
\begin{document}
\newcommand{\udots}{\mathinner{\mskip1mu\raise1pt\vbox{\kern7pt\hbox{.}}
\mskip2mu\raise4pt\hbox{.}\mskip2mu\raise7pt\hbox{.}\mskip1mu}}
\maketitle

\begin{abstract}
Given a graph  $G$, denote by  $\D$, $\avd$  and $\chi^\prime$  the maximum degree, the average degree  and
the  chromatic index of $G$, respectively.   A simple graph $G$ is called  {\it edge-$\Delta$-critical}   if $\chi^\prime(G)=\Delta+1$  and $\chi^\prime(H)\le\Delta$  for every proper subgraph $H$ of $G$.  Vizing in 1968 conjectured that if $G$ is edge-$\D$-critical, then   $\avd \ge \D-1+ \frac{3}{n}$.   We show that
$$
\begin{displaystyle}
\avd \ge
\begin{cases}
 0.69241\D-0.15658 \quad\,\: \mbox{ if } \Delta\geq 66, \\
 0.69392\D-0.20642\quad\;\,\mbox{ if } \Delta=65, \mbox{ and  } \\
 0.68706\D+0.19815\quad\! \quad\mbox{if } 56\leq \Delta\leq64.
 \end{cases}
 \end{displaystyle}
 $$
This result  improves the best known bound $\frac{2}{3}(\Delta +2)$ obtained by Woodall in 2007 for $\D \ge 56$.  Additionally, Woodall constructed an infinite family of graphs showing his result cannot be improved by well-known Vizing's Adjacency Lemma and other known edge-coloring techniques.  To over come the barrier,  we follow the recently developed recoloring technique of Tashkinov trees to expand Vizing fans technique to a larger class of trees.
\end{abstract}

\par {\small {\it Keywords: }\ \  edge-$k$-coloring;  edge-critical graphs;  Vizing's Adjacency Lemma }

\vskip 0.2in \baselineskip 0.1in
\section{Introduction }

All graphs in this paper, unless otherwise stated,  are simple graphs.  Let $G$ be a graph with vertex set $V(G)$ and edge set $E(G)$.  Denote by $\D$ the maximum degree of $G$.    An edge-$k$-coloring of a graph $G$ is a mapping $\varphi:E(G)\rightarrow \{1,2,\cdots,k\}$ such that  $\phiv(e)\neq\phiv(f)$ for any two adjacent edges $e$ and $f$.  We call $\{1, 2, \cdots, k\}$ the color set of $\phiv$.  Denote by $\mathcal{C}^k(G)$ the set of all edge-$k$-colorings of $G$. The chromatic index $\chi^\prime(G)$ is the least integer $k\geq 0$ such
that $\mathcal{C}^k(G)\neq \emptyset$.  We call $G$ {\it class one} if $\chi^\prime(G)=\Delta$. Otherwise, Vizing' theorem \cite{viz1} gives
$\chi^\prime(G)=\Delta+1$ and $G$ is said to be of {\it class two}.
An edge $e$  is called {\it critical} if
$\chi^\prime(G-e)<\chi^\prime(G)$, where $G-e$ is the subgraph obtained from $G$ by removing the edge $e$.
A graph $G$ is called {\it edge-$\D$-critical} if  $\chi^\prime(G) = \D+1$ and $\chi^\prime(H) \le \D$ holds for any proper subgraph $H$ of $G$.  Clearly, if $G$ is edge-$\D$-critical, then $G$ is connected and $\chi^\prime(G-e) = \D$ for any $e\in E(G)$.  Let $\avd(G)$ denote the
average degree of a graph $G$.
Vizing \cite{viz3} made the following conjecture
in 1968, which is commonly referred as Vizing's Average Degree Conjecture.
\begin{conj} {\em [Vizing \cite{viz3}]} \label{conj1} If $G$ is an edge-$\Delta$-critical graph of $n$ vertices, then $\avd(G)\geq
\Delta-1+\frac{3}{n}$.
\end{conj}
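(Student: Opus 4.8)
The statement is Vizing's long-standing Average Degree Conjecture (Conjecture~\ref{conj1}), so the plan is necessarily a program: reduce it to a discharging problem, feed in progressively stronger adjacency lemmas — including the Tashkinov-tree machinery this paper develops — and aim for local constraints strong enough to close the count. First we recast the inequality additively. Setting $\delta(v)=\Delta-d(v)\ge 0$, the bound $\avd(G)\ge\Delta-1+\tfrac{3}{n}$ is equivalent to $\sum_{v}\delta(v)\le n-3$, hence to the assertion that, with initial charge $\mu(v):=d(v)-(\Delta-1)$, one has $\sum_v\mu(v)\ge 3$. A vertex of degree $\Delta$ carries charge $+1$; a vertex of degree $<\Delta$ carries charge $\le 0$; and since an edge-$\Delta$-critical graph has minimum degree at least $2$ (delete the edge at a degree-$1$ vertex and extend a $\Delta$-edge-colouring of that proper subgraph), the negative charges sit only at the \emph{deficient} vertices, those with $2\le d(v)\le\Delta-1$, each of which must collect $\Delta-1-d(v)$ units to reach $0$.

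Next we assemble the structural input. Vizing's Adjacency Lemma already gives each deficient vertex $v$ at least $\Delta-d(v)+1$ neighbours of degree $\Delta$ — two more than its deficit — but that slack is eaten by $\Delta$-vertices having many deficient neighbours, which is exactly why only bounds of the form $c\Delta$ are known. So the plan is to layer in the successively stronger local lemmas available for critical graphs: the refinements of VAL due to Vizing, Kostochka--Stiebitz, Woodall and others that constrain the degrees of the neighbours of the $\Delta$-neighbours of $v$; the exclusion of small multifans and Kierstead paths; and finally the extension of the Vizing-fan technique to a broader class of Tashkinov trees developed here. Their common target is a single conclusion: around each deficient vertex sits a large tree whose internal vertices are $\Delta$-vertices, and the trees of two distinct deficient vertices cannot share more than a controlled number of their internal $\Delta$-vertices.

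Granting such a tree lemma, the discharging rule is that each $\Delta$-vertex ships its unit of charge to the deficient vertices whose trees contain it as an internal vertex, split so that a deficient vertex of degree $k$ collects exactly its deficit $\Delta-1-k$; the tree lemma ensures each deficient vertex occupies enough internal positions, and the bounded-overlap clause ensures no $\Delta$-vertex is overdrawn, so every vertex finishes with charge $\ge 0$. The residual $+3$ we would extract at one distinguished critical edge $e=xy$: colourings of $G-e$ exist by criticality and impose the multifan structure at $e$ — already forcing $d(x)+d(y)\ge\Delta+2$ — and a careful count inside this multifan, invisible to the purely global tally, supplies the extra $3$. Checking that the whole system has value $\ge 3$ then reduces to a small linear program whose coefficients are precisely the guarantees furnished by the tree lemma.

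The main obstacle is the tree lemma itself, and it is why the conjecture remains open. Woodall exhibited an infinite family of configurations satisfying Vizing's Adjacency Lemma and every previously known edge-colouring constraint yet having average degree only about $\tfrac{2}{3}(\Delta+2)$; hence no discharging scheme built solely from those facts can beat Woodall's bound, and the improvement must come from genuinely longer-range recolouring — Tashkinov trees of unbounded size combined with the expanded fan technique here. At present that delivers only the $\approx 0.6924\,\Delta$ bounds of the abstract; proving that the deficient vertices' trees truly cannot overlap enough to drive the total charge below $3$, thereby raising the bound all the way to $\Delta-1+\tfrac{3}{n}$, is the step I expect to be the real difficulty. (The conjecture is known for small $\Delta$; the goal would be an argument uniform in $\Delta$.)
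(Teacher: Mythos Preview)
The statement you are asked to prove is Conjecture~\ref{conj1}, which the paper presents precisely as a conjecture: it is \emph{not} proved in the paper. The paper's contribution is Theorem~\ref{main}, a partial result giving $\avd(G)\gtrsim 0.69\,\Delta$, far from the conjectured $\Delta-1+3/n$. So there is no ``paper's own proof'' of this statement to compare your attempt against.

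Your submission is not a proof either, and you say so yourself: you describe a discharging program, posit a hypothetical ``tree lemma'' about bounded overlap of Tashkinov-type trees rooted at deficient vertices, and then explicitly identify that lemma as the missing ingredient and the reason the conjecture remains open. That is an accurate assessment of the landscape, and your reformulation $\sum_v(\Delta-d(v))\le n-3$ together with the charge $\mu(v)=d(v)-(\Delta-1)$ is a clean way to set up the problem. But the central step --- the tree/overlap lemma that would let the discharging close --- is not a gap in your argument so much as the entire content of the conjecture, and nothing in the paper (or elsewhere) supplies it. The paper's Lemmas~\ref{ppp}--\ref{lemfact} and the broom machinery push the known adjacency constraints a bit further, enough to beat Woodall's $\tfrac{2}{3}(\Delta+2)$ barrier, but they are nowhere near strong enough to drive the bound to $\Delta-1$; in particular they do not yield any version of the global overlap control you would need.

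In short: you have correctly diagnosed that the statement is open and sketched a plausible but unproven strategy; the paper does the same, stating the conjecture and proving only the weaker Theorem~\ref{main}.
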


The conjecture has been verified   for   $\Delta\leq6$, see \cite{fw,ja,ka,lmz}.
For arbitrary $\Delta$, there are a few results on the  lower bound for $\bar{d}(G)$. Let $G$ be
an edge-$\D$-critical graph.
Fiorini \cite{fs} showed, for $\Delta\geq2$,
$$ \bar{d}(G)\geq
\begin{cases}
\frac{1}{2} (\Delta+1)   \quad     \Delta~\mbox{is odd;}\\
\frac{1}{2} (\Delta+2)        \quad    \Delta~\mbox{is even.}
\end{cases} $$
Haile \cite{hd}  improved the bounds as follows.
$$ \bar{d}(G)\geq
\begin{cases}
\frac{3}{5} (\Delta+2)   \quad\quad\quad      \Delta=9,11,13;\\
\frac{\Delta+6}{2}-\frac{12}{\Delta +4}    \quad\quad \,     \Delta\geq 10,~\Delta~\mbox{is even};\\
\frac{15+\sqrt{29}}{2} \qquad\quad\quad \Delta=15;\\
\frac{\Delta+7}{2}-\frac{16}{\Delta +5}   \quad\quad\:     \Delta\geq 17, \Delta~\mbox{is odd}.
\end{cases} $$
Sanders and Zhao \cite{sz}  showed $\bar{d}(G)\geq\frac{1}{2}(\Delta+\sqrt{2\Delta-1})$ for $\Delta\geq 2.$
 Woodall \cite{w1} improved the bound to $\bar{d}(G)\geq\frac{t(\Delta+t-1)}{2t-1}, \mbox{where} ~t=\lceil\sqrt{\Delta/2} \:\rceil.$
Improving Vizing's Adjacency Lemma, Woodall \cite{w2} improved the coefficient of $\D$ from $\frac 12$ to $\frac 23$ as follows.
$$
\avd(G) \ge
\begin{cases}
\frac{2}{3}(\Delta+ 1) \quad \mbox{ if } \D \ge 2;\\
\frac{2}{3}\Delta+ 1 \quad\;\;\,  \mbox{ if } \D \ge 8;\\
\frac{2}{3}(\Delta+ 2) \quad  \mbox{ if } \D \ge 15.
\end{cases}
$$

In the same paper, Woodall  provided the following  example demonstrating that the above result cannot be improved by the use of  his new adjacency Lemmas (see Lemma \ref{lemma2.4} and Lemma \ref{p}) and Vizing's Adjacency Lemma alone.

Let $G$ be a graph comprising $k$ vertices of degree $4$, all of whose neighbors have degree $\Delta$, and $2k$ vertices of degree $\Delta$, each of which is adjacent to two vertices of degree $4$ and $\Delta-2$ vertices of degree $\Delta$.
Graph $G$ can be chosen to be triangle-free, and indeed to have arbitrarily large girth. Then $G$ may not be edge-$\Delta$-critical, but it satisfies the conclusions of all the existing lemmas at that time including two mentioned above, and it has average degree $\frac{2}{3}(\Delta+ 2)$. So, using these known results, it is impossible to prove that the example is not edge-$\D$-critical.  On the other hand,  we note that using our new result, Claim~\ref{claim 4} in Section 3, it is readily seen that if $\D\ge 6$ then the above example is not edge-$\D$-critical. By proving a few stronger properties of edge-$\D$-critical graphs, we improve Woodall's result as below for $\D \ge 56$.
\begin{thm}\label{main}
If $G$ is an  edge-$\Delta$-critical graph, then
\[
\bar{d}(G) \geq
\begin{cases}
 0.69241\D-0.15658 \quad\,\: \mbox{ if } \Delta\geq 66, \\
 0.69392\D-0.20642\quad\;\,\mbox{ if } \Delta=65, \mbox{ and  } \\
 0.68706\D+0.19815\quad\! \quad\mbox{if } 56\leq \Delta\leq64.
 \end{cases}
 \]
\end{thm}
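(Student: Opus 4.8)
\noindent\emph{Proof strategy.}
The plan is to combine two ingredients: (1) a family of new adjacency-type structural lemmas for edge-$\Delta$-critical graphs, proved by the Tashkinov-tree recolouring method (these include Claim~\ref{claim 4} of Section~3), and (2) a discharging argument that converts those local constraints into the global lower bound on $\avd(G)$. Throughout, write $V_k$ for the set of vertices of degree $k$ in $G$, call a vertex \emph{big} if its degree is at least (roughly) $0.69\Delta$ and \emph{small} otherwise, and recall that Vizing's Adjacency Lemma already forces every neighbour of a small vertex $u$ to have at least $\Delta-d(u)+1$ neighbours of degree $\Delta$, with Woodall's Lemma~\ref{lemma2.4} and Lemma~\ref{p} sharpening this. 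As Woodall's example shows, these cannot by themselves beat $\tfrac23(\Delta+2)$, so one more layer is needed.

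The key first step is to look one distance further out. Instead of analysing a Vizing fan (a star-like structure) rooted at a critical edge, I would build a Tashkinov tree $T$ — a tree that simultaneously generalises Vizing fans and Kierstead paths — hanging off a critical edge, and use Kempe-chain recolourings along $T$ to constrain the degrees of the vertices appearing on $T$, in particular those at distance two from a small vertex. The target statement is that a small vertex cannot be "surrounded" by vertices that are themselves poor in $\Delta$-neighbours; equivalently, a big vertex $w$ adjacent to a small vertex is forced to have almost all of its remaining neighbours of degree $\Delta$, and more quantitatively a degree-$\Delta$ vertex can carry only a limited number of small neighbours, the bound decreasing as those neighbours get smaller. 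These strengthened lemmas are exactly what is needed to block Woodall's construction already at $\Delta\ge 6$, as remarked before the theorem.

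With the structural lemmas in hand I would run a discharging argument. Assign to each vertex $v$ the charge $\mu(v)=d(v)-(c\Delta+d_0)$, where $c\Delta+d_0$ is the bound to be proved with the three choices of $(c,d_0)$ matching the three ranges of $\Delta$; then $\sum_{v}\mu(v)=n(\avd(G)-(c\Delta+d_0))$, so it suffices to show $\sum_v\mu(v)\ge 0$, and I would in fact aim for $\mu(v)\ge 0$ at every vertex after redistribution except for a bounded number of exceptions handled by a direct count. The rules send a fixed amount $\tau$ (to be optimised) across each edge from a big vertex to a small one, with secondary rules for medium-degree vertices and for degree-$\Delta$ vertices adjacent to very-low-degree vertices. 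For a small vertex $u$ of degree $j$, VAL together with the new lemmas gives at least $\Delta-j+1$ degree-$\Delta$ neighbours, so $u$ receives at least $(\Delta-j+1)\tau$, which one checks covers its deficit $c\Delta+d_0-j$ provided $\tau$ is chosen $\gtrsim c$. For a degree-$\Delta$ vertex $w$, the new lemmas cap both the number and the smallness of its small neighbours, so the total charge it sends out does not exceed its surplus $(1-c)\Delta-d_0$. Optimising $\tau$, $c$, $d_0$ subject to this system of inequalities yields the stated constants; the additive terms $-0.15658$, $-0.20642$, $+0.19815$ (rather than a clean $c\Delta$) come precisely from absorbing the few exceptional vertices via a global count, in the spirit of the "$+\tfrac3n$" term in Conjecture~\ref{conj1}.

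The main obstacle is the first ingredient: proving the Tashkinov-tree lemmas. The recolouring arguments are delicate — for every admissible colour pattern along $T$ one must exhibit a sequence of Kempe swaps producing a proper $\Delta$-edge-colouring of $G$ minus the critical edge, contradicting criticality — and passing from fans and paths to genuinely larger trees sharply multiplies the configurations that must be ruled out. A secondary, quantitative difficulty is the tuning of the discharging: the coefficients are only a hair above $\tfrac23$, so the thresholds and the amount $\tau$ must be chosen tightly, and one must check that no vertex class is left with negative charge across the whole range $\Delta\ge 56$; it is this last verification, together with the $\Delta$-dependent hypotheses of the structural lemmas, that forces the split into $\Delta\ge 66$, $\Delta=65$, and $56\le\Delta\le 64$.
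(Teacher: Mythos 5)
Your high-level strategy---new Tashkinov-tree-style adjacency lemmas proved by Kempe recolouring, combined with a discharging argument and a separate count on a small exceptional set---is indeed the paper's approach, and your intuition about ``looking one distance further out'' is correct: the new structural results (Lemmas~\ref{ppp} and~\ref{pp}, and Claim~\ref{claim 4}) do constrain degrees two steps away from a low-degree vertex, and the exceptional vertices are controlled by a second-neighbourhood count (Claims~\ref{claim 3}--\ref{claim 5}). So the skeleton is right.

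The discharging mechanism you propose, however, is not the one the paper uses, and the difference is where your sketch would fail to close. You propose to send a \emph{fixed} amount $\tau$ across each big-to-small edge, with the target $c\D+d_0\approx0.69\D$ serving as the big/small threshold, and then show $\mu(v)\ge0$ everywhere up to a bounded exceptional set. The paper instead fixes a threshold $q$ strictly \emph{above} the target, $q=\min\{\tfrac{2\sqrt2(\D-1)-2}{2\sqrt2+1},\tfrac34\D-2\}\approx0.74\D$, and uses a \emph{proportional} rule: each $(>q)$-vertex $y$ spreads its entire surplus $d(y)-q$ equally over its $(<q)$-neighbours. This is essential, because the charge a small $x$ receives from each big neighbour $z$ is then bounded below by $\frac{d(z)-q}{d(z)-\sigma_q(x,z)}$, and the whole content of the new lemmas goes into lower bounds on $\sigma_q(x,z)$, i.e.\ upper bounds on the \emph{number} of small neighbours of $z$. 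A fixed transfer $\tau$ has no such self-correction: to cover a degree-$2$ vertex's deficit you need $\tau\gtrsim c$, while a $\D$-vertex with $s$ small neighbours must satisfy $s\tau\le(1-c)\D-d_0$, and neither VAL nor the new lemmas cap $s$ tightly enough in general to make those two constraints compatible at $c\approx0.69$. You also do not get every vertex up to the target after discharging; rather, every vertex outside $X_1=\{x:d(x)\le 3q-2\D\}$ ends with at least $q$ (already above the claimed bound), and $X_1$-vertices may have a deficit relative to $q$. The gap $q-(c\D+d_0)$ is exactly what gets spent on $X_1$, and that deficit is controlled by a double count of edges into the set $Z_1(c)$ of high-degree vertices in the second neighbourhood of $X_1$, not by any further discharging rule. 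It is this count that produces the three $\D$-ranges and the irrational-looking constants, not a residue of exceptional vertices \`a la the ``$+\tfrac3n$'' term.

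One smaller correction: the structural lemmas the paper actually proves and uses are not general Tashkinov trees but (simple) \emph{brooms} of radius two, via Lemma~\ref{broom}, together with Kierstead-path facts (Lemma~\ref{p4}); Lemmas~\ref{ppp} and~\ref{pp} are generalizations of Woodall's $\sigma(x,y)$-lemmas to a tunable threshold $q\in(\D/2,\ \D-d(x)/2-2]$. Restricting to this narrow family is what keeps the recolouring case analysis tractable; jumping to arbitrary Tashkinov trees, as you suggest, would multiply the configurations without being needed here.
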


We will prove a few technic lemmas in Section 2 and give the proof of Theorem~\ref{main} in Section 3.  We will use the following terminology and notation. Let $G$ be a graph.  Denote by  $N(x)$ the neighborhood of $x$ for any $x\in V(G)$,  and $d(x)$ the degree of $x$, i.e., $d(x) = |N(x)|$.
For any nonnegative integer $m$,  we call a vertex $x$  an {\it  $m$-vertex}  if $d(x) = m$, a {\it $(<m)$-vertex}  if $d(x) < m$, and {\it $(>m)$-vertex} if $d(x) >m$.  Correspondingly, we call a
neighbor $y$ of $x$ an {\it  $m$-neighbor}  if $d(y) = m$, etc..  Let $k$ be a positive integer such that $\mathcal{C}^k(G-e) \ne \emptyset$,  and  let  $\varphi\in\mathcal{C}^k(G-e)$ and $v \in V(G)$.   Let $\varphi(v) = \{\varphi(e) \,:\,   e \mbox { is incident with }~ v \}$ and $\bar{\varphi}(v) = \{ 1, \cdots, k\}\setminus\varphi(v)$.
We call $\varphi(v)$ the set of colors seen by  $v$ and $\bar{\varphi}(v)$ the set of colors missing at $v$.
A set $X\subseteq V(G)$
is called {\it elementary} with respect to $\varphi$  if $\bar{\varphi}(u)\cap \bar{\varphi}(v)=\emptyset$ for every two distinct
vertices $u,v \in X$. For any color $\alpha$, let $E_{\alpha}$ denote the set of edges assigned color $\alpha$. Clearly, $E_{\alpha}$ is matching of $G$.  For any two colors $\alpha$ and $\beta$, the components  of induced by edges in $E_{\alpha}\cup E_{\beta}$, named $(\alpha, \beta)$-chains,   are even cycles and paths with alternating color $\alpha$ and $\beta$.
For a vertex $v$ of $G$, we denote by
$P_v(\alpha, \beta, \varphi)$ the unique $(\alpha, \beta)$-chain that contains the
vertex $v$.   Let $\varphi/ P_v(\alpha, \beta, \varphi)$ denote the edge-$k$-coloring obtain from $\phiv$ by switching colors $\alpha$ and $\beta$ on the edges on $P_v(\alpha, \beta, \phiv)$.

\section{Lemmas }

Let $q$ be a positive number, $G$ be an edge-$\D$-critical graph and $x\in V(G)$. For each $y\in N(x)$, let $\sigma_q(x, y) =  |\{ z\in N(y)\setminus \{x\} \ : \ d(z) \ge q\}|$, the number of neighbors of $y$ (except $x$) with degree at least $q$.  Vizing studied the case $q=\D$ and obtained the following result.

\begin{lem} {\em [Vizing's Adjacency Lemma \cite{viz2}]}\label{VAL}
Let $G$ be  an edge-$\Delta$-critical graph.  Then   $\sigma_{\D} (x, y) \ge \D -d(x) +1$ holds for every  $xy\in E(G)$.
\end{lem}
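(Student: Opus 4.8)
The plan is to argue by contradiction starting from a $\Delta$-edge-coloring of $G - xy$, which exists since $G$ is edge-$\Delta$-critical, and to use Kempe chain recolorings in the spirit of Vizing's fan argument.

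First I would record the bookkeeping. Let $\varphi \in \mathcal{C}^{\Delta}(G - xy)$. Since $\chi'(G) = \Delta + 1$, the coloring $\varphi$ cannot be extended to $xy$, so $\bar\varphi(x) \cap \bar\varphi(y) = \emptyset$; consequently $|\bar\varphi(x)| = \Delta - d(x) + 1$ and every color $\alpha \in \bar\varphi(x)$ is seen at $y$, i.e.\ there is an edge $yz_\alpha$ with $\varphi(yz_\alpha) = \alpha$ and $z_\alpha \neq x$ (as $xy$ is uncolored). Distinct colors give distinct vertices $z_\alpha$, so it suffices to prove that each $z_\alpha$ is a $\Delta$-vertex: then $y$ has at least $|\bar\varphi(x)| = \Delta - d(x) + 1$ neighbors of degree $\Delta$ other than $x$, which is exactly $\sigma_{\Delta}(x,y) \geq \Delta - d(x) + 1$.

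Next, fix $\alpha \in \bar\varphi(x)$ and a color $\beta \in \bar\varphi(y)$, which exists because $d(y) \leq \Delta$; note $\alpha \neq \beta$. I would look at the $(\alpha,\beta)$-chain $P_x(\alpha,\beta,\varphi)$, which has $x$ as one endpoint. If it did not also end at $y$, then switching colors along it would yield a coloring in $\mathcal{C}^{\Delta}(G-xy)$ with $\beta \in \bar\varphi(x) \cap \bar\varphi(y)$, which extends to $xy$ --- impossible. So the chain runs from $x$ to $y$, and since $\beta$ is missing at $y$ it reaches $y$ along an $\alpha$-edge, necessarily $yz_\alpha$; hence $z_\alpha$ is an internal vertex of this chain and is incident with a $\beta$-colored edge, so $\beta \notin \bar\varphi(z_\alpha)$.

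Finally I would rule out $d(z_\alpha) < \Delta$. Suppose $\delta \in \bar\varphi(z_\alpha)$; then $\delta \neq \alpha$ (since $z_\alpha$ sees $\alpha$ on $yz_\alpha$) and $\delta \neq \beta$. If $\delta$ is also missing at $y$, then the $(\alpha,\delta)$-chain through $z_\alpha$ is exactly the single edge $yz_\alpha$, and switching it frees $\alpha$ at $y$ while leaving $x$ untouched, so $xy$ can be colored $\alpha$ --- contradiction. If $\delta$ is seen at $y$, I would first switch a $(\beta,\delta)$-chain through $y$ (or through $z_\alpha$) to make $\delta$ missing at $y$ without recoloring the edge $yz_\alpha$ or disturbing the color $\alpha$ at $x$, and then apply the previous step. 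The hard part is this last case: one must keep track of whether the chains being switched pass through $x$ or $y$, and the cleanest way to organize it is via a maximal Vizing fan at $y$ through $x$ and $z_\alpha$, whose vertex set is elementary --- and that already forbids $\delta \in \bar\varphi(z_\alpha)$. In each case a proper $\Delta$-edge-coloring of $G$ emerges, contradicting $\chi'(G) = \Delta+1$; hence $\bar\varphi(z_\alpha) = \emptyset$, i.e.\ $d(z_\alpha) = \Delta$, which finishes the argument.
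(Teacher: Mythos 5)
The paper does not prove Vizing's Adjacency Lemma; it cites Vizing's 1965 paper. So I am evaluating your proposal on its own terms, and I believe it contains a genuine gap that is not a matter of tidying up details.

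Your reduction is to the claim that \emph{every} vertex $z_\alpha$ with $\varphi(yz_\alpha)=\alpha\in\bar\varphi(x)$ has degree $\Delta$. That is a strictly stronger assertion than the lemma, and it is not true in general. The Vizing fan $F=\{y,x\}\cup\{z_\alpha:\alpha\in\bar\varphi(x)\}$ is elementary, which gives only $\sum_\alpha|\bar\varphi(z_\alpha)|\le d(x)+d(y)-\Delta-2$; when $d(x)+d(y)>\Delta+2$ this bound is positive, so elementarity leaves room for some $z_\alpha$ to have $\bar\varphi(z_\alpha)\ne\emptyset$, i.e.\ degree below $\Delta$. This is exactly why your closing sentence --- that elementarity of a maximal Vizing fan ``already forbids $\delta\in\bar\varphi(z_\alpha)$'' --- does not hold: elementarity forbids $\delta$ from being missing at two fan vertices simultaneously, not from being missing at $z_\alpha$ at all. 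The earlier Kempe-chain step has the same hole in concrete form: when $\delta\in\varphi(y)$ you propose switching $P_y(\beta,\delta,\varphi)$ to make $\delta$ missing at $y$, but nothing prevents this chain from having $z_\alpha$ as its other endpoint (indeed $z_\alpha$, missing $\delta$ and seeing $\beta$, is always an endpoint of a $(\beta,\delta)$-chain). If $P_y(\beta,\delta,\varphi)=P_{z_\alpha}(\beta,\delta,\varphi)$, the switch makes $\delta$ seen at $z_\alpha$, destroying the hypothesis you intend to reuse, and switching ``through $z_\alpha$'' instead is the same chain. I was not able to close this case, and the fan-rotation trick shows it is genuinely stable: if $\delta\in\bar\varphi(z_\alpha)$ and $w$ is the $\delta$-neighbor of $y$, recoloring $xy\to\alpha$, $yz_\alpha\to\delta$ and uncoloring $yw$ produces a perfectly legitimate $\varphi'\in\mathcal{C}^\Delta(G-yw)$, so this configuration cannot be contradictory.

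The correct route, and the one behind the citation, is a counting argument over the \emph{maximal} Vizing fan (multi-fan) $F$ at $y$. Elementarity gives that the sets $\bar\varphi(v)$, $v\in F$, are pairwise disjoint, and maximality gives an injection $c\mapsto(\text{the $c$-neighbor of $y$})$ from $\bigcup_{v\in F\setminus\{y\}}\bar\varphi(v)$ into $F\setminus\{y,x\}$. Writing $D$ for the number of vertices in $F\setminus\{y,x\}$ of degree below $\Delta$ and $N$ for those of degree $\Delta$, these two facts combine to $(\Delta-d(x)+1)+D\le D+N$, hence $N\ge\Delta-d(x)+1$, which is the lemma. Note that the degree-$\Delta$ vertices produced this way live in $F\setminus\{y,x\}$, which generally properly contains $\{z_\alpha\}$, and they need not be the $z_\alpha$'s themselves. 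Your first two paragraphs (the bookkeeping, the disjointness of $\bar\varphi(x)$ and $\bar\varphi(y)$, and the fact that $\bar\varphi(z_\alpha)\cap\bar\varphi(y)=\emptyset$ via the $(\alpha,\beta)$-chain) are correct and in fact reappear inside the standard argument; the place where the proof must change is the per-vertex conclusion, which has to be replaced by the fan-wide count above.
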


Woodall~\cite{w2} studied  $\sigma_q(x,y)$ for the case $q=2\D -d(x) -d(y) +2$ and obtained the following two results.   For convention, we let $\sigma(x,y) = \sigma_q(x, y)$ when $q=2\D -d(x) -d(y) +2$.
\begin{lem}\label{lemma2.4}{\em [Woodall~\cite{w2}]} Let $xy$ be an edge in an  edge-$\Delta$-critical graph $G$. Then there are at least $\Delta-\sigma(x,y)\geq \Delta-d(y)+1$ vertices $z\in N(x)\setminus\{y\}$ such that $\sigma(x,z)\geq 2\Delta-d(x)-\sigma(x,y).$
\end{lem}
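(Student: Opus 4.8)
The inequality $\Delta-\sigma(x,y)\ge\Delta-d(y)+1$ is trivial, since $\sigma(x,y)\le|N(y)\setminus\{x\}|=d(y)-1$; and by Vizing's Adjacency Lemma $y$ has at least $\Delta-d(x)+1$ neighbours of degree $\Delta$ other than $x$, so $d(x)+d(y)\ge\Delta+2$, the threshold $q=2\Delta-d(x)-d(y)+2$ satisfies $q\le\Delta$, and hence $\sigma(x,y)\ge\sigma_\Delta(x,y)\ge\Delta-d(x)+1$; in particular $\Delta-\sigma(x,y)\le d(x)-1=|N(x)\setminus\{y\}|$, so the asserted count never exceeds the trivial one. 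What must be proved is that $\Delta-\sigma(x,y)$ neighbours $z$ of $x$ carry the large value $\sigma(x,z)\ge 2\Delta-d(x)-\sigma(x,y)$. The whole argument lives inside a fixed $\varphi\in\mathcal C^\Delta(G-xy)$ and uses Vizing fans centred at $x$ together with Kempe-chain interchanges. The key device is elementary: if $\psi\in\mathcal C^\Delta(G-uv)$ and $\{u,v,w\}$ is $\psi$-elementary, then $(\Delta-d(u)+1)+(\Delta-d(v)+1)+(\Delta-d(w))\le\Delta$, i.e., $d(w)\ge 2\Delta-d(u)-d(v)+2$; so to prove $\sigma(x,z)\ge m$ it suffices to find one $\psi\in\mathcal C^\Delta(G-xz)$ and $m$ vertices $w\in N(z)\setminus\{x\}$ with $\{x,z,w\}$ $\psi$-elementary, and the vertices $w_1,\dots,w_p$ of any maximal fan $(x=w_0,w_1,\dots,w_p)$ at $z$ all qualify, because a maximal fan's vertex set together with its centre is elementary.

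First I would take a maximal fan $F=(y=y_0,y_1,\dots,y_k)$ at $x$ for $\varphi$ and $xy$; then $\{x\}\cup V(F)$ is $\varphi$-elementary, which already forces $d(y_i)\ge q$ for $1\le i\le k$. The useful point is that the hole can be moved: for each $i$, the fan rotation that shifts $\varphi(xy_1),\dots,\varphi(xy_i)$ down $F$ yields $\varphi_i\in\mathcal C^\Delta(G-xy_i)$ for which $\{x\}\cup V(F)$ is still elementary; and for each $\alpha\in\bar\varphi(y)$ the edge $xz_\alpha$ with $\varphi(xz_\alpha)=\alpha$ can be made the hole after a single $(\alpha,\beta)$-interchange with $\beta\in\bar\varphi(x)$. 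This produces a family of neighbours $z\ne y$ of $x$, together with colourings $\psi_z\in\mathcal C^\Delta(G-xz)$ that retain a large elementary set around $x$.

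Next, for each such $z$ with colouring $\psi=\psi_z$, grow a maximal fan $F'=(x=w_0,w_1,\dots,w_p)$ at $z$; by the dictionary above $\sigma(x,z)\ge p$, so the task reduces to $p\ge 2\Delta-d(x)-\sigma(x,y)$. Maximality alone gives only $p\ge\Delta-d(w_p)+1$ (from $\bar\psi(w_p)\subseteq\{\psi(zw_2),\dots,\psi(zw_p)\}$, using $\psi(zw_1)\in\bar\psi(x)$), which is far too weak when $d(w_p)$ is large; the improvement must come from combining $F'$ with the elementary structure at $x$ carried over in $\psi_z$, and from the standard fan-plus-Kempe-chain move at $w_p$: for $\delta\in\bar\psi(w_p)$ and $\epsilon\in\bar\psi(z)$, the $(\delta,\epsilon)$-interchange on the chain through $w_p$ either extends $F'$ (contradicting maximality) or transfers a missing colour so as to enlarge the count we are tracking; iterating over $\bar\psi(w_p)$ and bookkeeping against the colours of $F$ at $x$ and the set $\bar\varphi(y)$ pins $\Delta-p$ to a quantity governed by $\sigma(x,y)$. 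Finally one checks that all but at most $d(x)-1-(\Delta-\sigma(x,y))$ of the $d(x)-1$ neighbours $z\ne y$ of $x$ admit this construction, the exceptions being constrained to lie among the high-degree neighbours of $y$ counted by $\sigma(x,y)$; so at least $\Delta-\sigma(x,y)$ neighbours of $x$ are good, which finishes the proof.

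The hard part is running the last two moves together. One fan plus one Kempe chain is exactly Vizing's Adjacency Lemma; the improvement needs two coupled copies of that argument --- one around $(x,y)$ and one around $(x,z)$ --- executed simultaneously, and the overlaps among the missing-colour sets of $x$, $y$, $z$ and the fan vertices must be controlled tightly enough that the two thresholds ($q$ and $2\Delta-d(x)-d(z)+2$) and the two counts ($\Delta-\sigma(x,y)$ and $2\Delta-d(x)-\sigma(x,y)$) line up precisely. Getting the count down to exactly $\Delta-\sigma(x,y)$, rather than to some weaker function of $d(x)$ and $d(y)$, is the most delicate point --- and it is precisely what makes this lemma stronger than Vizing's.
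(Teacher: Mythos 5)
The paper does not prove Lemma~\ref{lemma2.4}; it cites it from Woodall~\cite{w2}, so there is no in-paper proof to compare against. Judged on its own, your proposal is a plan rather than a proof, and the two places where the actual content of the lemma would have to appear are left as promises.

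Concretely: your setup is sound---the observation that for $\psi\in\mathcal C^\Delta(G-xz)$ an elementary triple $\{x,z,w\}$ forces $d(w)\ge 2\Delta-d(x)-d(z)+2$, so that fan vertices at $z$ all contribute to $\sigma(x,z)$; the fan rotation producing $\psi_z\in\mathcal C^\Delta(G-xz)$ for various $z\in N(x)$; and the trivial bound $\Delta-\sigma(x,y)\ge\Delta-d(y)+1$. But the two decisive steps are only described qualitatively. First, you note that maximality of the fan $F'$ at $z$ gives only $p\ge\Delta-d(w_p)+1$, and then say the improvement to $p\ge 2\Delta-d(x)-\sigma(x,y)$ will come from ``iterating over $\bar\psi(w_p)$ and bookkeeping against the colours of $F$ at $x$ and the set $\bar\varphi(y)$''; that sentence is precisely where a proof would have to live, and no inequality is actually derived. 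Second, the assertion that ``all but at most $d(x)-1-(\Delta-\sigma(x,y))$'' of the neighbours of $x$ admit the construction, with the exceptions ``constrained to lie among the high-degree neighbours of $y$,'' is stated without any argument linking the bad $z$'s to $\sigma(x,y)$. Note also that the number of neighbours $z$ reachable by the hole-shifting you describe is a priori $|\bar\varphi(y)|=\Delta-d(y)+1$, which can be strictly smaller than the target count $\Delta-\sigma(x,y)$, so even the supply of candidate vertices $z$ is not established. You are candid that ``the hard part is running the last two moves together''; that candour is accurate, but it means the proposal does not yet prove the lemma. A correct proof has to produce the coupled estimate explicitly (typically by fixing one colouring, tracking exactly which colours in $\bar\varphi(x)\cup\bar\varphi(y)$ reappear at the fan vertices around $z$, and using Kempe-chain non-intersection to rule out double-counting), and your sketch does not yet contain that computation.
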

Let $x$ be a vertex in a graph $G$ and $y\in N(x)$.   Vizing's Adjacency Lemma shows that $\sigma_{\D}(x,y) \ge \D -d(x) +1$.  So, $\sigma(x, y) \ge \D - d(x) +1$.  Woodall studied their difference through the following two parameters.
\begin{eqnarray*}
p_{min}(x)& := & \min_{y\in N(x)}\sigma(x,y)-\Delta+d(x)-1 \quad  \mbox{ and }\\
p (x) & := & \min \{\ p_{min}(x), \left\lfloor\frac{d(x)}{2}\right\rfloor-1\ \}.
\end{eqnarray*}
Clearly, $p(x) < d(x)/2 -1$. As a corollary, the following lemma shows that there are about $d(x)/2$ neighbors $y$ of $x$ such that $\sigma(x, y) \ge \D/2$. In general, for any positive number $q$ with $q\le \D$,  we define the following two parameters.
\begin{eqnarray*}
p_{min}(x,q)& := & \min_{y\in N(x)}\sigma_q(x,y)-\Delta+d(x)-1 \quad  \mbox{ and }\\
p (x,q) & := & \min \{\ p_{min}(x,q), \left\lfloor\frac{d(x)}{2}\right\rfloor-3\ \}.
\end{eqnarray*}

\begin{lem}\label{p} {\em [Woodall~\cite{w2}]} Every vertex $x$ in an edge-$\Delta$-critical graph has
 at least $d(x)-p(x)-1$ neighbors $y$ for which $\sigma(x,y)\geq \Delta-p(x)-1$.
\end{lem}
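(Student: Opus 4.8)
The plan is to derive Lemma~\ref{p} quickly from Woodall's improved adjacency lemma (Lemma~\ref{lemma2.4}), after splitting according to which of the two quantities realizes the minimum defining $p(x)$. Since $\sigma(x,y)$, $\D$ and $d(x)$ are integers, $p_{min}(x)$ is an integer, so exactly one of the following holds: either $p_{min}(x)\le \lfloor d(x)/2\rfloor-1$, or $p_{min}(x)\ge \lfloor d(x)/2\rfloor$. (Recall also that Vizing's Adjacency Lemma gives $p_{min}(x)\ge 0$.)

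\textbf{Case 1: $p_{min}(x)\le \lfloor d(x)/2\rfloor-1$, so that $p(x)=p_{min}(x)$.} I would pick $y_0\in N(x)$ realizing the minimum, so $\sigma(x,y_0)=\D-d(x)+1+p_{min}(x)$, and then apply Lemma~\ref{lemma2.4} to the edge $xy_0$. This produces at least $\D-\sigma(x,y_0)=d(x)-1-p_{min}(x)$ neighbors $z\in N(x)\setminus\{y_0\}$ with $\sigma(x,z)\ge 2\D-d(x)-\sigma(x,y_0)=\D-1-p_{min}(x)$. Since $p(x)=p_{min}(x)$, this is exactly the assertion that $x$ has at least $d(x)-p(x)-1$ neighbors $y$ with $\sigma(x,y)\ge \D-p(x)-1$.

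\textbf{Case 2: $p_{min}(x)\ge \lfloor d(x)/2\rfloor$, so that $p(x)=\lfloor d(x)/2\rfloor-1$.} Here I would use only the definition of $p_{min}(x)$: every $y\in N(x)$ satisfies $\sigma(x,y)\ge \D-d(x)+1+p_{min}(x)\ge \D-d(x)+1+\lfloor d(x)/2\rfloor$. Because $2\lfloor d(x)/2\rfloor\ge d(x)-1$ always holds, the right-hand side is at least $\D-\lfloor d(x)/2\rfloor=\D-p(x)-1$. Hence \emph{all} $d(x)$ neighbors of $x$ satisfy $\sigma(x,y)\ge \D-p(x)-1$, and $d(x)\ge d(x)-p(x)-1$, so the conclusion holds a fortiori.

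As for the main obstacle: the real edge-coloring content has already been absorbed into Lemma~\ref{lemma2.4} (and into Vizing's Adjacency Lemma), so what remains is essentially bookkeeping. The one thing to get right is in Case 1 — choosing the neighbor $y_0$ that realizes $\min_{y}\sigma(x,y)$ so that the count and the degree threshold returned by Lemma~\ref{lemma2.4} land on $d(x)-p(x)-1$ and $\D-p(x)-1$ exactly. Case 2 is pure parity manipulation of the floor; the only subtlety is that the boundary value $p_{min}(x)=\lfloor d(x)/2\rfloor-1$ must be handled by Case 1 (where $p(x)=p_{min}(x)$), since for odd $d(x)$ the ``all neighbors'' estimate of Case 2 would otherwise fall short by one.
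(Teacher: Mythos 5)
Your proof is correct. The paper itself does not prove Lemma~\ref{p} (it is cited from Woodall~\cite{w2}), but the paper does give a proof of the analogous generalization, Lemma~\ref{pp}, and your argument mirrors that proof exactly: split on which term realizes the minimum defining $p(x)$, apply the adjacency lemma (Lemma~\ref{lemma2.4} here, Lemma~\ref{ppp} there) to a neighbor attaining $p_{min}(x)$ in the first case, and in the second case observe that the lower bound on $\sigma(x,y)$ from the definition of $p_{min}(x)$ already suffices for all neighbors. The arithmetic in both cases checks out, and your closing remark about where the boundary value $p_{min}(x)=\lfloor d(x)/2\rfloor-1$ belongs is a correct and useful observation (Case 2 would indeed be off by one for odd $d(x)$ there).
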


When $d(x)\le \D -4$, we generalize  the above results by allowing  $q$ taking various values and obtain the following two results, which serve as key ingredients in our proof of Theorem~\ref{main}.

\begin{lem}\label{ppp} Let $xy$ be an edge in an  edge-$\Delta$-critical graph $G$ and
$q$ be a positive number.   If  $\D/2 < q \le \D -d(x)/2 -2$,   then
$x$ has at least
 $\Delta-\sigma_q(x,y)-2$ vertices $z\in N(x)\setminus\{y\}$ such that   $\sigma_q(x,z)\geq 2\Delta-d(x)-\sigma_q(x,y)-4$.
\end{lem}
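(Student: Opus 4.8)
The plan is to follow Woodall's proof of Lemma~\ref{lemma2.4}, carrying the free parameter $q$ through the argument in place of the fixed value $2\Delta-d(x)-d(y)+2$. Since $xy$ is critical, fix a coloring $\varphi\in\mathcal{C}^\Delta(G-xy)$. As $\varphi$ does not extend to $G$, the pair $\{x,y\}$ is elementary, so $\bar\varphi(x)\cap\bar\varphi(y)=\emptyset$, $|\bar\varphi(x)|=\Delta-d(x)+1$, $|\bar\varphi(y)|=\Delta-d(y)+1$, and every color missing at $y$ is seen on a unique edge at $x$. Abbreviating $\sigma_q:=\sigma_q(x,y)$, the target count rewrites as
$$\Delta-\sigma_q-2=|\bar\varphi(y)|+\bigl|\{w\in N(y)\setminus\{x\}:d(w)<q\}\bigr|-2,$$
which indicates that the "good" neighbours of $x$ should be drawn from two families.

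First I would produce a set $S\subseteq N(x)\setminus\{y\}$ with $|S|\ge\Delta-\sigma_q-2$. For each color $\delta\in\bar\varphi(y)$ let $x_\delta\in N(x)$ be the neighbour with $\varphi(xx_\delta)=\delta$; these $|\bar\varphi(y)|$ vertices are distinct, and $(y,x_\delta)$ is a Vizing fan at $x$. For each $w\in N(y)\setminus\{x\}$ with $d(w)<q$ the vertex $w$ misses a color, so a Kempe-chain swap on a pair $(\varphi(yw),\mu)$, $\mu\in\bar\varphi(w)$, yields an equivalent coloring in which a further neighbour $z_w$ of $x$ occupies a fan position with respect to $y$; after discarding at most two overlaps (the edge $yx$ itself, and a possible collision of a rotated color with $\bar\varphi(x)$) the $z_w$'s are distinct from each other and from the $x_\delta$'s, giving $|S|\ge|\bar\varphi(y)|+|\{w:d(w)<q\}|-2=\Delta-\sigma_q-2$. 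Second, for a fixed $z\in S$ I would bound $\sigma_q(x,z)$ from below: in the coloring witnessing $z\in S$ the set $\{x,y,z\}$ is elementary, so $|\bar\varphi(z)|\le\Delta-|\bar\varphi(x)|-|\bar\varphi(y)|$ and $d(z)$ is forced up, and then for a neighbour $v\in N(z)\setminus\{x\}$ a Kierstead-type argument along $y,x,z,v$, using a Kempe chain on a color of $\bar\varphi(x)\cup\bar\varphi(y)$ present at $z$, shows that unless $v$ is one of at most four exceptional vertices the chains behave so that $v$ cannot have degree below $q$. Here $q>\Delta/2$ is exactly what makes "two vertices of degree $\ge q$ share a seen color" usable in the chain arguments, while $q\le\Delta-d(x)/2-2$ guarantees enough free colors and enough room in $N(z)$ to run them. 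Counting the non-exceptional $v$ gives $\sigma_q(x,z)\ge 2\Delta-d(x)-\sigma_q-4$.

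The main obstacle is the Kempe-chain bookkeeping in the two rotation steps: one must verify that the chosen two-colored chains terminate at the expected vertices rather than prematurely at $x$ or $y$, that the rotations preserve the elementary structure already established, and that the number of vertices lost to exceptions is exactly two in building $S$ and exactly four in each bound $\sigma_q(x,z)$, since these constants feed directly into the coefficients of Theorem~\ref{main}. Degenerate situations — chains passing through $x$ or $y$, a color of an edge $yw$ lying unexpectedly in $\bar\varphi(x)$, and small values of $|\bar\varphi(z)|$ — must be treated separately, but none of them should alter the counts.
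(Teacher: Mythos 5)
Your proposal correctly identifies the first half of the argument: building a set of ``feasible'' neighbours $z$ of $x$ (those for which some recoloring puts $\phiv(xz)\in\bar\phiv(y)$) via fan positions and Kempe-chain rotations, losing a constant of~$2$; this is close in spirit to the paper's Claim~3.1. The gap is in the second half, where you bound $\sigma_q(x,z)$.

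You propose to bound $\sigma_q(x,z)$ directly by a Kierstead-path argument along $y,x,z,v$: for a neighbour $v$ of $z$ with $\phiv(zv)\in\bar\phiv(x)\cup\bar\phiv(y)$, the simple-broom elementarity (Lemma~\ref{broom}) forces $d(v)\ge q$ with at most one exception. But this only controls the $|U^*_z|=2\D-d(x)-d(y)+1$ neighbours of $z$ whose colours lie in $\bar\phiv(x)\cup\bar\phiv(y)$, giving $\sigma_q(x,z)\ge 2\D-d(x)-d(y)$. The lemma asks for $\sigma_q(x,z)\ge 2\D-d(x)-\sigma_q(x,y)-4$, and $d(y)$ can exceed $\sigma_q(x,y)+4$ by an arbitrary amount (every $(<q)$-neighbour of $y$ inflates $d(y)$ without touching $\sigma_q(x,y)$), so the direct Kierstead bound is genuinely weaker. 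Adjusting the number of ``exceptional vertices'' from one to four does not close this gap; the discrepancy is structural, not a matter of bookkeeping constants.

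What the paper actually does is bound the \emph{sum} $\sigma_q(x,y)+\sigma_q(x,z)$ jointly. Writing $\sigma_q(x,y)+\sigma_q(x,z)\ge 2\D-d(x)-|C_y|-|C_z|-|T|$, where $T$ is the set of colours $k\in\phiv(x)\cap\phiv(y)\cap\phiv(z)$ with both $d(y_k)<q$ and $d(z_k)<q$, the losses $|C_y|\le1$ and $|C_z|\le1$ follow from simple-broom elementarity, but $|T|\le2$ is the heart of the proof: it needs an extremal choice of coloring, Claims~A--C about how $(i,k)$-chains through $x$ must hit both $y$ and $z$, and a chain of four recolorings producing a contradiction. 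This is exactly the piece that does not follow by ``carrying $q$ through Woodall's argument''; Woodall's choice $q=2\D-d(x)-d(y)+2$ has the special feature $q=|\bar\phiv(x)|+|\bar\phiv(y)|$, which makes any $(<q)$-vertex automatically incompatible with an elementary set containing $x$ and $y$, so the $T$-problem never arises for him. For a general $q$ in the stated range this breaks, and the new recoloring machinery is precisely what the lemma adds. Your sketch does not contain an idea for bounding $|T|$ (or any equivalent of it), so as written it cannot reach the stated conclusion.
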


Due to its length, the proof of Lemma~\ref{ppp} will be placed at the end of this section. The following is a consequence of it.
\begin{lem}\label{pp} Let $G$ be an edge-$\D$-critical graph, $x\in V(G)$ and $q$ be a positive number.  If $\D/2 < q \le \D - d(x)/2 -2$,  then $x$ has at least $d(x)-p(x,q)-3$ neighbors $y$ for which $\sigma_q(x,y)\geq \Delta-p(x,q)-5$.
\end{lem}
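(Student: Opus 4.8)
The plan is to mirror, almost verbatim, the way Lemma~\ref{p} is obtained from Lemma~\ref{lemma2.4}, but now feeding Lemma~\ref{ppp} into the argument. First I would pick a neighbor $y_0\in N(x)$ attaining the minimum in the definition of $p_{min}(x,q)$, so that $\sigma_q(x,y_0)=\Delta-d(x)+1+p_{min}(x,q)$. Since the hypothesis $\Delta/2<q\le\Delta-d(x)/2-2$ is exactly the one required by Lemma~\ref{ppp}, I would apply that lemma to the edge $xy_0$: it supplies at least $\Delta-\sigma_q(x,y_0)-2$ vertices $z\in N(x)\setminus\{y_0\}$ with $\sigma_q(x,z)\ge 2\Delta-d(x)-\sigma_q(x,y_0)-4$.

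The next step is pure substitution. Plugging $\sigma_q(x,y_0)=\Delta-d(x)+1+p_{min}(x,q)$ into the count $\Delta-\sigma_q(x,y_0)-2$ gives $d(x)-3-p_{min}(x,q)$, and plugging it into the threshold $2\Delta-d(x)-\sigma_q(x,y_0)-4$ gives $\Delta-5-p_{min}(x,q)$. Hence I already have at least $d(x)-3-p_{min}(x,q)$ neighbors $z$ of $x$ (all distinct from $y_0$) for which $\sigma_q(x,z)\ge\Delta-5-p_{min}(x,q)$.

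It then remains to case-split on which term realises $p(x,q)=\min\{p_{min}(x,q),\lfloor d(x)/2\rfloor-3\}$. If $p(x,q)=p_{min}(x,q)$, the previous sentence reads: at least $d(x)-p(x,q)-3$ neighbors $z$ with $\sigma_q(x,z)\ge\Delta-p(x,q)-5$, which is exactly the claim (and $d(x)-3-p_{min}(x,q)>0$ here since $p_{min}(x,q)\le\lfloor d(x)/2\rfloor-3$, so the bound is non-vacuous). If instead $p(x,q)=\lfloor d(x)/2\rfloor-3<p_{min}(x,q)$, then $p_{min}(x,q)\ge\lfloor d(x)/2\rfloor-2$, so for \emph{every} $y\in N(x)$ one has $\sigma_q(x,y)\ge\Delta-d(x)+1+p_{min}(x,q)\ge\Delta-d(x)+\lfloor d(x)/2\rfloor-1$; invoking the elementary inequality $d(x)\le 2\lfloor d(x)/2\rfloor+1$, this is at least $\Delta-\lfloor d(x)/2\rfloor-2=\Delta-p(x,q)-5$. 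Thus all $d(x)$ neighbors of $x$ qualify, and in particular at least $d(x)-p(x,q)-3$ of them do.

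I do not anticipate a genuine obstacle: the substantive content lives entirely in Lemma~\ref{ppp}, and the present statement is essentially a bookkeeping corollary. The only points that need care are (i) checking that the constraint on $q$ transfers unchanged to Lemma~\ref{ppp}, (ii) the clean substitution of the minimising value $\sigma_q(x,y_0)=\Delta-d(x)+1+p_{min}(x,q)$, and (iii) the floor inequality $d(x)\le 2\lfloor d(x)/2\rfloor+1$ used in the second case; one should also record that the counts stay non-negative so that no degenerate subcase arises.
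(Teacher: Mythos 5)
Your proof is correct and mirrors the paper's own argument essentially step for step: choose $y_0$ attaining $p_{min}(x,q)$, apply Lemma~\ref{ppp} to $xy_0$, substitute, and case-split on whether $p(x,q)$ equals $p_{min}(x,q)$ or $\lfloor d(x)/2\rfloor-3$. The only cosmetic difference is in the second case, where you round up $p_{min}(x,q)\ge\lfloor d(x)/2\rfloor-2$ at the start while the paper keeps the strict inequality $\sigma_q(x,y)>\Delta-p(x,q)-6$ and rounds at the end; both are the same bookkeeping.
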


\begin{proof} Let $y\in N(x)$ such that $p_{min}(x,q)=\sigma_q(x,y)-\Delta+d(x)-1$.

 If $p(x,q) =p_{min}(x,q)$,   by Lemma~\ref{ppp}, $x$ has at least $\Delta-\sigma_q(x,y)-2=\Delta-(\Delta-d(x)+p_{min}(x,q)+1)-2 =
 d(x)-p_{min}(x,q)-3$ vertices $z\in N(x)\setminus\{y\}$ such that $\sigma_q(x,z)\geq 2\Delta-d(x)-4-\sigma_q(x,y)=\Delta-p_{min}(x,q)-5$.

If
$p(x,q)=\left\lfloor\frac{d(x)}{2}\right\rfloor-3<p_{min}(x,q)$, then for every $y\in N(x),$ $\sigma_q(x,y)>\Delta -d(x)+1+\left\lfloor\frac{d(x)}{2}\right\rfloor-3
\geq\Delta-\left\lfloor\frac{d(x)}{2}\right\rfloor-3
=\Delta-p(x,q)-6$. So $\sigma_q(x,y)\geq\Delta-p(x,q)-5$.
\end{proof}

Let $G$ be a graph (in this paragraph, $G$ may be a multigraph), $e_1=y_0y_1\in E(G)$ and $\phiv\in C^k(G-e_1)$.
A Tashkinov tree $T$  is a sequence $T=(y_0,e_1,y_1,\cdots,e_p,y_p)$ with $p\geq 1$ consisting of edges $e_1, e_2, \cdots,e_p$ and vertices $y_0,y_1\cdots,y_p$ such that the following two conditions hold.
\begin{itemize}
\item The edges $e_1, e_2, \cdots,e_p$ are distinct and   $e_i=y_ry_{i}$ for each  $1\leq i\leq p$, where $r < i$; \\
\item For every edge $e_i$ with $2\leq i\leq p,$ there is a vertex $y_h$ with $0\leq h<i$ such that $\varphi(e_i)\in \bar\varphi(y_h).$
\end{itemize}
Clearly, a Tashkinov tree is indeed a tree of $G$. Tashkinov~\cite{Tashkinov-2000} proved that if $G$ is edge-$k$-critical  with $k \ge \D +1$, then $V(T)$ is elementary. In the above definition, if $e_i = y_0y_i$ for every $i$ (i.e., $T$ is a star with $y_0$ as the center),  then $T$ is a Vizing fan. The classic result of Vizing~\cite{ss} show that for every Vizing fan $T$ the set $V(T)$ is elementary if $G$ is edge-$k$-critical for every $k \ge \D$, which  includes  edge-$\D$-critical graphs. In the definition of Tashkinov tree, if $e_i = y_{i-1}y_i$ for every $i$ (i.e. $T$ is a path with end-vertices $y_0$ and $y_p$), then $T$ is a Kierstead path, which was introduced by Kierestead~\cite{Kierstead84}.  Kierstead proved that for every Kierstead path $P$ the set $V(P)$ is elementary if $G$ is an edge-$k$-critical with $k \ge \D +1$.  For simple graphs, following Kierstead's proof, Zhang~\cite{Zhang2000} noticed that for a Kierstead path $P$ the set $V(P)$ is elementary if $G$ is edge-$\D$-critical and $d(y_i) < \D$ for every $i$ with  $2\le i \le p$.
Clearly,  every Kierstead path $P$ with three vertices is a Vizing fan, so $V(P)$ is elementary if $G$ is edge-$\D$-critical.

\begin{lem}\label{p4}{\em [Kostochka and Stiebitz~\cite{ss}]}
Let $G$ be a graph with maximum degree $\Delta$ and ${\chi}^\prime(G)= {\Delta}+1.$ Let $e_1\in E(G)$ be a critical edge and $\varphi\in\mathcal{C}^\Delta(G-e_1).$ If $K=(y_0,e_1,y_1,e_2,y_2,e_3,y_3)$ is a Kierstead path with respect to $e_1$ and $\varphi,$ then the following statements hold:
\begin{enumerate}
\item  $\bar{\varphi}(y_0)\cap \bar{\varphi}(y_1)=\emptyset;$
\item if $d(y_2)<\Delta,$ then $V(K)$ is elementary with respect to $\varphi;$
\item  if $d(y_1)<\Delta,$ then $V(K)$ is elementary with respect to $\varphi;$
\item  if $\Gamma=\bar{\varphi}(y_0)\cup \bar{\varphi}(y_1),$ then $|\bar{\varphi}(y_3)\cap \Gamma|\leq 1.$
\end{enumerate}
\end{lem}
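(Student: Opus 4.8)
The plan is to reduce the whole statement to two simple observations and then push colours around with Kempe interchanges. First, $(1)$ is immediate: a colour $\delta\in\bar\varphi(y_0)\cap\bar\varphi(y_1)$ could be given to the uncoloured edge $e_1$, yielding a proper $\Delta$-edge-colouring of $G$ and contradicting $\chi'(G)=\Delta+1$; we will reuse this as $(\ast)$: in no $\psi\in\mathcal C^\Delta(G-e_1)$ is a colour missing at both $y_0$ and $y_1$. Next, since $e_2=y_1y_2$ meets $y_1$ we get $\varphi(e_2)\in\bar\varphi(y_0)$ from the Kierstead condition, and likewise $\varphi(e_3)\in\bar\varphi(y_0)\cup\bar\varphi(y_1)=:\Gamma$, with $\varphi(e_3)\neq\varphi(e_2)$ since both are seen at $y_2$. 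Finally, $(y_0,e_1,y_1,e_2,y_2)$ is a Kierstead path for $e_1,\varphi$---a Vizing fan with centre $y_1$---so the standard three-vertex analysis (which needs only that $e_1$ is critical) gives that $\{y_0,y_1,y_2\}$ is elementary; call this $(\ast\ast)$. It persists under recolouring $e_3$ and under any interchange that keeps the colour of $e_2$ in $\bar\varphi(y_0)$. In particular, since $\varphi(e_3)\in\Gamma$, by $(\ast\ast)$ the colour $\varphi(e_3)$ is \emph{seen} at $y_2$; hence, passing to a colouring in which $(y_0,e_1,y_1,e_2,y_2)$ is still a Kierstead path but $e_3$ carries a colour missing at $y_2$ makes the old colour of $e_3$ missing at $y_2$ while still lying in $\Gamma$---a contradiction with $(\ast\ast)$.

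For $(2)$ and $(3)$, assume $V(K)$ is not elementary. By $(\ast\ast)$ the failure involves $y_3$: some $\delta\in\bar\varphi(y_3)\cap\bar\varphi(y_i)$, $i\in\{0,1,2\}$. If $i=2$ then $\delta\neq\varphi(e_3)$, so recolouring $e_3$ by $\delta$ is legal and frees $\varphi(e_3)\in\Gamma$ at $y_2$, contradicting $(\ast\ast)$; thus $\bar\varphi(y_2)\cap\bar\varphi(y_3)=\emptyset$ holds with no hypothesis, and in the surviving cases $\delta\in\varphi(y_2)$. For $i\in\{0,1\}$ I would interchange a $(\beta,\delta)$-chain with $\beta\in\{\varphi(e_2),\varphi(e_3)\}$ chosen so that $\beta$ is missing at $y_0$ or at $y_1$; then that vertex is an endpoint of---or isolated in---the $(\beta,\delta)$-subgraph, which controls whether $y_0,y_1$ sit on the switched chain. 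After the switch one either finds a colour missing at both $y_0$ and $y_1$ and applies $(\ast)$, or reaches a legal recolouring of $e_3$ by a colour missing at $y_2$ and applies $(\ast\ast)$. The degree hypotheses supply the target colour: in $(2)$ a fixed $\gamma_2\in\bar\varphi(y_2)$ (available since $d(y_2)<\Delta$), moved to $y_3$ by one interchange of a $(\gamma_2,\delta)$-chain; in $(3)$ a fixed $\gamma_1\in\bar\varphi(y_1)$ (available since $d(y_1)<\Delta$), used to recolour $e_2$ by a colour missing at both $y_1$ and $y_2$, which frees $\varphi(e_2)\in\bar\varphi(y_0)$ at $y_1$ and again contradicts $(\ast)$.

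Part $(4)$ has no degree hypothesis; instead one uses that there are \emph{two} colours $\delta_1\neq\delta_2$ in $\bar\varphi(y_3)\cap\Gamma$. If one of them is missing at $y_2$ we are done by the unconditional argument above; otherwise $\delta_1,\delta_2\in\varphi(y_2)$, and I would play $\delta_1$, $\delta_2$ and $\varphi(e_3)$ off one another along $(\delta_1,\delta_2)$- and $(\varphi(e_3),\delta_j)$-chains---exploiting once more that $\varphi(e_3),\delta_1,\delta_2\in\Gamma$, so $y_0$ or $y_1$ is always an endpoint of the two-coloured subgraph being switched---until some colour is missing at both $y_0$ and $y_1$, contradicting $(\ast)$. (If a degree hypothesis does hold, $(4)$ is immediate from $(2)$ or $(3)$.)

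I expect the main difficulty to be the bookkeeping rather than any isolated idea. For each interchange one must verify that the chosen $(\beta,\delta)$-chain avoids $y_0$, $y_1$ or $y_2$ as needed---this is exactly where $\beta\in\bar\varphi(y_0)\cup\bar\varphi(y_1)$ is used, since a path cannot acquire a third endpoint at such a vertex---and one must recheck after every recolouring that $(y_0,e_1,y_1,e_2,y_2)$ remains a Kierstead path (the colour of $e_2$ must stay in $\bar\varphi(y_0)$) so that $(\ast\ast)$ still applies. A related subtlety is that switching a $(\beta,\delta)$-chain through $y_3$ only trades $\delta$ for $\beta$ in $\bar\varphi(y_3)$, merely relocating the bad colour; to make the recursion terminate one must ensure that in every branch a legal recolouring of $e_3$ (in $(2)$) or of $e_2$ (in $(3)$), or the configuration of $(\ast)$, is genuinely reached.
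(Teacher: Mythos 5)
Lemma~\ref{p4} is not proved in this paper: it is quoted directly from the monograph of Stiebitz, Scheide, Toft and Favrholdt~\cite{ss}, so there is no in-paper argument to compare your proposal against. Read on its own merits, your preliminary steps are correct: part~$(1)$; the Vizing-fan fact~$(\ast\ast)$ that $\{y_0,y_1,y_2\}$ is elementary; and the unconditional observation that $\bar\varphi(y_2)\cap\bar\varphi(y_3)=\emptyset$ (recolour $e_3$ by a common missing colour, observe that $\varphi(e_3)\in\Gamma$ is then missing at $y_2$, and that the truncated Kierstead path $(y_0,e_1,y_1,e_2,y_2)$ is undisturbed).

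The rest, however, is a sketch that does not close, and the bookkeeping you explicitly defer is not a routine check but the substance of the argument. For $(2)$ with $\delta\in\bar\varphi(y_3)\cap\bar\varphi(y_0)$, the ``one interchange of a $(\gamma_2,\delta)$-chain'' fails already in the case $\delta=\varphi(e_2)$: then $P_{y_3}(\gamma_2,\delta)$ may end at $y_2$ and traverse $e_2$ (with $y_1$ interior, since $\gamma_2,\delta\in\varphi(y_1)$ by elementarity), so the switch recolours $e_2$ by $\gamma_2\notin\bar\varphi(y_0)$, destroying the Kierstead path and thus the licence to invoke $(\ast\ast)$, while the resulting colouring has $\delta$ missing at $y_0$ and $y_2$ but seen at $y_1$, so $(\ast)$ does not apply either. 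Your plan for $(3)$ is circular as stated: while $(y_0,e_1,y_1,e_2,y_2)$ remains a Kierstead path, $(\ast\ast)$ already forbids a colour missing at both $y_1$ and $y_2$, so the step ``recolour $e_2$ by such a colour'' presupposes the path has been broken, at which point the concluding appeal to $\varphi(e_2)\in\bar\varphi(y_0)$ is unsupported; also, $\bar\varphi(y_1)\ne\emptyset$ holds always (the edge $e_1$ is uncoloured), and what $d(y_1)<\Delta$ actually buys is $|\bar\varphi(y_1)|\ge 2$. Part~$(4)$, the one assertion with no degree hypothesis to fall back on, is deferred outright, with no termination argument for the proposed sequence of interchanges. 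In short, the plan names the right tools (Vizing fans and Kempe chains) but has not shown that the chains can be steered past $e_2$ and past $y_0,y_1,y_2$ without breaking the very configuration the contradiction is to be drawn from.
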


In the definition of Tashkinov tree $T=(y_0, e_1, y_1, e_2, y_2, \cdots,  y_p)$,  we call $T$   a {\it broom} if $e_2=y_1y_2$ and for each $i \ge 3$, $e_i=y_2y_i$, i.e., $y_2$ is one of the end-vertices of $e_i$ for each $i \ge 3$. Moreover, we call $T$ a {\it simple broom} if $\phiv(e_i) \in \phibar(y_0)\cup\phibar(y_1)$ for each $i\ge 3$, i.e., $(y_0, e_1, y_1, e_2, y_2, e_i, y_i)$ is a Kierstead path.

\begin{lem}\label{broom}{\em [Chen, Chen, Zhao~\cite{cchen}]}
Let $G$ be an edge-$\D$-critical graph, $e_1=y_0y_1 \in E(G)$ and $\phiv \in\mathcal{C}^\Delta(G-e_1)$ and $B=\{y_0, e_1, y_1, e_2, y_2, \cdots, e_p,  y_p\}$ be a simple broom. If $|\bar{\varphi}(y_0)\cup \bar{\varphi}(y_1)|\geq 4$ and $\min\{d(y_1),d(y_2)\}< \Delta,$  then $V(B)$ is elementary with respect to $\varphi$.
\end{lem}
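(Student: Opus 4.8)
The plan is to induct on the number of leaves $p$. For $p=2$ the broom $B=(y_0,e_1,y_1,e_2,y_2)$ is a Kierstead path on three vertices, hence a Vizing fan, so $V(B)$ is elementary with respect to $\varphi$. For $p=3$ the broom is exactly a Kierstead path on four vertices, and since $\min\{d(y_1),d(y_2)\}<\Delta$, Lemma~\ref{p4}(2) or Lemma~\ref{p4}(3) gives that $V(B)$ is elementary; note the hypothesis $|\bar\varphi(y_0)\cup\bar\varphi(y_1)|\ge 4$ is not yet needed. So assume $p\ge 4$ and that the claim holds for the simple broom $B'=(y_0,e_1,y_1,\dots,e_{p-1},y_{p-1})$; then $\{y_0,\dots,y_{p-1}\}$ is elementary. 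For each $i$ with $3\le i\le p$ the sequence $K_i=(y_0,e_1,y_1,e_2,y_2,e_i,y_i)$ is a Kierstead path (by the definition of a simple broom), so Lemma~\ref{p4}(2)--(3) and $\min\{d(y_1),d(y_2)\}<\Delta$ yield that $V(K_i)=\{y_0,y_1,y_2,y_i\}$ is elementary; in particular $\bar\varphi(y_p)$ is disjoint from $\bar\varphi(y_0)\cup\bar\varphi(y_1)\cup\bar\varphi(y_2)$. Hence the only thing left to prove is that $\bar\varphi(y_p)\cap\bar\varphi(y_j)=\emptyset$ for every $j$ with $3\le j\le p-1$.

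Suppose, for contradiction, that $\delta\in\bar\varphi(y_j)\cap\bar\varphi(y_p)$ for some such $j$. Put $\Gamma=\bar\varphi(y_0)\cup\bar\varphi(y_1)$, a disjoint union by Lemma~\ref{p4}(1), with $|\Gamma|\ge 4$. The preceding paragraph together with the induction hypothesis gives $\delta\notin\Gamma\cup\bar\varphi(y_2)$. Moreover every edge $e_i$ with $2\le i\le p$ is incident with $y_2$ and is coloured from $\Gamma$, so $\varphi(e_2),\dots,\varphi(e_p)$ are $p-1$ pairwise distinct colours of $\Gamma$ and $\delta$ equals none of them. Now choose a colour $\gamma\in\Gamma$: if $|\Gamma|>p-1$, take $\gamma$ outside $\{\varphi(e_2),\dots,\varphi(e_p)\}$, so that no broom edge is a $\gamma$-edge; otherwise $|\Gamma|=p-1$, which forces $p\ge 5$ because $|\Gamma|\ge 4$, and we take $\gamma=\varphi(e_\ell)$ for a suitable index $\ell\notin\{2,j,p\}$ (there are at least $p-4\ge 1$ such choices). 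By symmetry assume $\gamma\in\bar\varphi(y_0)$. Using the elementarity facts from the first paragraph one checks that $y_0$ (missing $\gamma$, seeing $\delta$), $y_j$ (missing $\delta$, seeing $\gamma$), and $y_p$ (missing $\delta$, seeing $\gamma$) are each an endpoint of a $(\delta,\gamma)$-chain, whereas $y_1$ and $y_2$ see both colours and so are interior to their chains or lie on $(\delta,\gamma)$-cycles.

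Since a path has only two endpoints, at most one of the three pairs among $\{y_0,y_j,y_p\}$ can lie on a common $(\delta,\gamma)$-chain, which splits the argument into a small number of configurations. In each, I would switch exactly one of the three endpoint-chains $P_{y_0}(\delta,\gamma,\varphi)$, $P_{y_j}(\delta,\gamma,\varphi)$, $P_{y_p}(\delta,\gamma,\varphi)$ — chosen to be disjoint from the unique $(\delta,\gamma)$-chain through $y_2$ (this chain carries the single $\gamma$-coloured broom edge, if $\gamma$ is a broom colour) and, when $\gamma$ is a broom colour, chosen so that $\gamma$ remains missing at $y_0$ or $y_1$ after the switch. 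With such a choice the resulting colouring $\varphi'$ still makes $B$ a simple broom, the degrees are unchanged, and the switch has forced some colour of $\{\delta\}\cup\bar\varphi'(y_0)\cup\bar\varphi'(y_1)$ to be simultaneously missing at a vertex of $\{y_0,y_1\}$ and at a vertex of $\{y_j,y_p\}$. But then Lemma~\ref{p4} (applied to $K_j$ or $K_p$ with respect to $\varphi'$, using $\min\{d(y_1),d(y_2)\}<\Delta$) says the corresponding four-vertex set is elementary with respect to $\varphi'$, a contradiction. This completes the induction.

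The main obstacle is precisely the bookkeeping in the last step: one must track which $(\delta,\gamma)$-chain contains $y_2$ (equivalently, the $\gamma$-coloured broom edge) and ensure the chain being switched is disjoint from it, so that both the broom structure and, crucially, the condition $\varphi(e_i)\in\bar\varphi(y_0)\cup\bar\varphi(y_1)$ survive the exchange — the difficulty being most acute when $|\Gamma|$ is as small as $p-1$, forcing $\gamma$ itself to be a broom colour. This is exactly where the hypothesis $|\bar\varphi(y_0)\cup\bar\varphi(y_1)|\ge 4$ is used: it guarantees enough colours in $\Gamma$ to make a safe choice of $\gamma$ (and, for larger $p$, to steer around the remaining broom edges), while the degree hypothesis $\min\{d(y_1),d(y_2)\}<\Delta$ is invoked repeatedly through Lemma~\ref{p4} to keep every four-vertex Kierstead sub-path elementary before and after switching.
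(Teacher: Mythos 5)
The paper does not prove Lemma~\ref{broom}; it cites it from~\cite{cchen}, so this is a standalone assessment of your argument rather than a comparison with the paper's proof.

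Your strategy is the right one and, after trimming, essentially works: Lemma~\ref{p4} already gives that $\{y_0,y_1,y_2,y_i\}$ is elementary for every $i\ge 3$ (this is where $\min\{d(y_1),d(y_2)\}<\Delta$ enters), so it remains only to show $\bar{\varphi}(y_i)\cap\bar{\varphi}(y_j)=\emptyset$ for $3\le i<j\le p$; assuming a shared missing colour $\delta$, choose $\gamma\in\Gamma\setminus\{\varphi(e_2),\varphi(e_i),\varphi(e_j)\}$ (these three colours are distinct, being colours of distinct edges at $y_2$, which is exactly why $|\Gamma|\ge 4$ is needed), and run a $(\delta,\gamma)$-Kempe exchange to contradict Lemma~\ref{p4}. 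The induction on $p$ is harmless but superfluous. The genuine gap is the exchange step, which you yourself flag as ``the main obstacle'' and leave as a sketch, and the plan you describe for it can fail. You want to switch one of $P_{y_0},P_{y_i},P_{y_j}$ chosen (a) disjoint from the $(\delta,\gamma)$-chain through $y_2$ and (b), when $\gamma$ is a broom colour, so that $\gamma$ stays missing at $y_0$ or $y_1$ afterwards. There are configurations where no chain satisfies both: for instance $P_{y_i}=P_{y_j}$ can be a single chain with $y_2$ interior, while $P_{y_0}$ is a different chain. Then $P_{y_i}$ and $P_{y_j}$ fail (a), and $P_{y_0}$ — the only chain disjoint from the one through $y_2$ — fails (b), since swapping it removes $\gamma$ from $\bar{\varphi}(y_0)$ and $\gamma$ is never missing at $y_1$.

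The constraints you impose are stronger than needed, and that is precisely the source of the difficulty. You do not need the whole broom, nor the chain through $y_2$, to survive the swap; you only need the one Kierstead path $K_i=(y_0,e_1,y_1,e_2,y_2,e_i,y_i)$ (or $K_j$) to survive, and that is automatic from your choice of $\gamma$. Indeed $\varphi(e_2),\varphi(e_i),\varphi(e_j)\notin\{\delta,\gamma\}$, so $e_2,e_i,e_j$ lie on no $(\delta,\gamma)$-chain and keep their colours under any $(\delta,\gamma)$-swap, and $y_1,y_2$ see both $\delta$ and $\gamma$, so no swap changes $\bar{\varphi}(y_1)$ or $\bar{\varphi}(y_2)$. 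Since a chain has only two endpoints, at most one of $P_{y_i},P_{y_j}$ can equal $P_{y_0}$; say $P_{y_i}\ne P_{y_0}$, and set $\varphi'=\varphi/P_{y_i}(\delta,\gamma,\varphi)$. Then $y_0\notin P_{y_i}$, so $\gamma\in\bar{\varphi}'(y_0)$ still, while now $\gamma\in\bar{\varphi}'(y_i)$; moreover $K_i$ is a Kierstead path with respect to $\varphi'$, so $\{y_0,y_1,y_2,y_i\}$ must be elementary by Lemma~\ref{p4}, a contradiction. With this relaxation the ``small number of configurations'' you anticipate never materialises, and the chain through $y_2$ is irrelevant.
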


\begin{lem}\label{lemfact} Let $G$ be an edge-$\Delta$-critical graph,
$xy\in E(G)$, and  $\varphi\in\mathcal{C}^{\Delta}(G-xy)$.
Let $q$ be a positive number with   $d(x)<q\leq\Delta-1$ and  $Z=\{z\in N(x)\backslash \{y\}\ : \ d(z)>q, \varphi(xz)\in \bar{\varphi}(y)\}$.
Then for every $z\in Z$ the following three inequalities hold.
\begin{eqnarray}
&& |Z|  \geq  \Delta-d(y)+1-\left\lfloor\frac{d(x)+d(y)-\Delta-2}{\Delta-q}\right\rfloor\\
&&  \sum_{z\in Z}(d(z)-q)  \geq  (\Delta-d(y)+1)(\Delta-q)-d(x)-d(y)+\Delta+2\\
&&  \sigma_{q}(x,z)  \geq  2\Delta-d(x)-d(y)+1-\left\lfloor\frac{d(x)+d(y)+d(z)-2\Delta-2}{\Delta-q}\right\rfloor
  \end{eqnarray}
  \end{lem}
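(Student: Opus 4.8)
The plan is to start from Vizing's Adjacency Lemma applied to the critical edge $xy$ and the coloring $\varphi\in\mathcal{C}^\Delta(G-xy)$, and to classify the neighbors of $x$ (other than $y$) according to whether the color on the connecting edge is missing at $y$. Since $\varphi$ misses $xy$, the vertex $x$ sees $d(x)-1$ colors and $y$ sees $d(y)-1$ colors, and by elementarity of $\{x,y\}$ (this is the Vizing-fan case of Tashkinov's theorem, or simply the classical fact for critical edges) $\bar\varphi(x)\cap\bar\varphi(y)=\emptyset$; hence $|\bar\varphi(x)\cup\bar\varphi(y)|=(\Delta-d(x)+1)+(\Delta-d(y)+1)$. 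Among the $d(x)-1$ edges at $x$ other than $xy$, the number carrying a color in $\bar\varphi(y)$ is at least $|\bar\varphi(y)|-(\text{colors of }\bar\varphi(y)\text{ not used at }x)$; more usefully, the edges at $x$ whose color lies in $\bar\varphi(y)$ number exactly $d(x)-1-|\varphi(x)\cap\varphi(y)|\ge d(x)-1-(d(y)-1)\cdot(\text{something})$ — I will instead count directly: the set $N_y:=\{z\in N(x)\setminus\{y\}:\varphi(xz)\in\bar\varphi(y)\}$ has size at least $(\Delta-d(x)+1)$ is the wrong direction; rather $|N_y|\ge \Delta-d(y)+1$ minus a correction, because each of the $\Delta-d(y)+1$ colors missing at $y$ that is also missing at $x$ contributes no edge. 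The clean bookkeeping is: colors missing at $y$ but seen at $x$ each give one edge in $N_y$, and the number of colors missing at $y$ but also missing at $x$ is zero by elementarity — wait, those are colors in $\bar\varphi(y)$ that are \emph{not} at $x$, i.e. in $\bar\varphi(x)\cap\bar\varphi(y)=\emptyset$. So every color of $\bar\varphi(y)$ is seen at $x$, giving $|N_y|=|\bar\varphi(y)|\cap\varphi(x)|=\Delta-d(y)+1$ edges — but some of those edges could have their other endpoint of degree $\le q$; that is precisely where the floor term enters.

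The key step is therefore to bound, via the VAL-type counting, how many of these $\Delta-d(y)+1$ edges can land on low-degree vertices. Apply Vizing's Adjacency Lemma (Lemma~\ref{VAL}) at $x$: every neighbor $z$ of $x$ with $\varphi(xz)\in\bar\varphi(y)$ and $d(z)\le q$ ``uses up'' missing colors, and by the elementarity of the Vizing fan rooted at $x$ through $xy$ (Lemma~\ref{p4}(1) for the three-vertex Kierstead path, iterated, or directly Vizing's fan lemma), one gets that the total deficiency $\sum(\Delta-d(z)+1)$ over such $z$ cannot exceed $|\bar\varphi(x)\cup\bar\varphi(y)|-1 = 2\Delta-d(x)-d(y)+1$. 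Each low-degree $z$ (with $d(z)\le q$) contributes at least $\Delta-q+1$, hmm, at least $\Delta-q$ after adjusting the ``$+1$''. Dividing, the number of such small-degree edges is at most $\big\lfloor\frac{2\Delta-d(x)-d(y)+1 - (\Delta - d(y)+1)}{\Delta-q}\big\rfloor$-ish $=\big\lfloor\frac{\Delta - d(x)}{\Delta - q}\big\rfloor$, which I must reconcile with the stated $\big\lfloor\frac{d(x)+d(y)-\Delta-2}{\Delta-q}\big\rfloor$; the reconciliation comes from counting more carefully that only $d(x)+d(y)-\Delta-2$ of the missing-color budget is ``available'' to be wasted on small vertices once the $\Delta-d(y)+1$ edges of $N_y$ are forced to exist and the fan at $y$ is taken into account. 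Inequality (1) is then $|Z| = |N_y| - \#\{\text{small}\} \ge (\Delta-d(y)+1) - \lfloor\frac{d(x)+d(y)-\Delta-2}{\Delta-q}\rfloor$. Inequality (2) follows from (1) by a convexity/extremal argument: $\sum_{z\in Z}(d(z)-q)$ is minimized when as many $z$ as possible have degree exactly $q+1$ (contributing $1$) and the forced total degree surplus is pinned down by the budget $2\Delta-d(x)-d(y)+1$; rearranging gives the stated $(\Delta-d(y)+1)(\Delta-q)-d(x)-d(y)+\Delta+2$. Inequality (3) is (1) applied with $z$ playing the role of a new root: for fixed $z\in Z$, run the same argument counting neighbors of $x$ whose edge-color is missing at $z$ (using $d(z)>q$), yielding $\sigma_q(x,z)\ge 2\Delta-d(x)-d(y)+1-\lfloor\frac{d(x)+d(y)+d(z)-2\Delta-2}{\Delta-q}\rfloor$, where the three-way sum in the floor reflects the combined deficiencies at $x$, $y$, and $z$ in the Kierstead path $(y,xy,x,xz,z)$ whose vertex set is elementary by Lemma~\ref{p4}.

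The main obstacle I anticipate is getting the floor terms exactly right — in particular tracking whether each small-degree neighbor contributes $\Delta-q$ or $\Delta-q+1$ to the ``wasted budget,'' and correctly identifying which portion of $|\bar\varphi(x)\cup\bar\varphi(y)|$ is genuinely free to be absorbed by small vertices versus already committed to forcing the edges of $Z$ to exist. A secondary subtlety is inequality (3): one must verify that the path $(y,e_1,x,xz,z)$ is a genuine Kierstead path with respect to $xy$ and $\varphi$ — this needs $\varphi(xz)\in\bar\varphi(y)$, which holds by definition of $Z$ — so that Lemma~\ref{p4}(1) (or the elementarity of a Vizing fan / short Tashkinov tree) legitimately gives $\bar\varphi(y)\cap\bar\varphi(z)\cap\bar\varphi(x)$ behavior needed to combine the three deficiencies. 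Once the counting constants are nailed down, (2) is a routine rearrangement and (3) parallels (1) verbatim with the extra term $d(z)-\Delta$ entering the numerator of the floor.
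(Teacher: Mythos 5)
Your approach for inequality (1) is in the right spirit — set $Z_y=\{z\in N(x)\setminus\{y\}:\varphi(xz)\in\bar\varphi(y)\}$, note $|Z_y|=\Delta-d(y)+1$ by $\bar\varphi(x)\cap\bar\varphi(y)=\emptyset$, and use the elementarity of the Vizing fan $\{x,y\}\cup Z_y$ — but your budget bookkeeping has the sign flipped. Elementarity gives $\sum_{z\in Z_y}|\bar\varphi(z)|\le \Delta-|\bar\varphi(x)|-|\bar\varphi(y)|=d(x)+d(y)-\Delta-2$ (the quantity in the numerator of the floor), not $\le 2\Delta-d(x)-d(y)+1$ as you wrote. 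Each $z\in Z_y\setminus Z$ has $d(z)\le q$, hence $|\bar\varphi(z)|\ge\Delta-q$, so $(|Z_y|-|Z|)(\Delta-q)\le d(x)+d(y)-\Delta-2$, which directly gives (1). Your ``reconciliation'' worry is a symptom of this confusion; once you use the correct bound the floor term drops out with no hand-waving. Inequality (2) is then not a convexity argument as you suggest, but the identical elementarity bound rewritten via $|\bar\varphi(z)|=\Delta-d(z)$: one gets $\sum_{z\in Z_y}d(z)\ge |Z_y|\Delta-(d(x)+d(y)-\Delta-2)$, and since $d(z)-q\le 0$ on $Z_y\setminus Z$, $\sum_{z\in Z}(d(z)-q)\ge\sum_{z\in Z_y}(d(z)-q)$, which simplifies to (2).

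The more serious gap is in (3). First, you have the fan pointing the wrong way: $\sigma_q(x,z)$ counts neighbors of $z$, not of $x$, so the relevant set is $U^*_z=\{u\in N(z)\setminus\{x\}:\varphi(zu)\in(\bar\varphi(x)\cup\bar\varphi(y))\setminus\{\varphi(xz)\}\}$, and $\{y,x,z\}\cup U^*_z$ is a \emph{simple broom} (in the paper's terminology), not a Kierstead path. Second, and crucially, Lemma~\ref{p4} applied to each individual four-vertex path $(y,e_1,x,xz,z,zu,u)$ gives only that each $\{y,x,z,u\}$ is elementary; it does \emph{not} give $\bar\varphi(u)\cap\bar\varphi(u')=\emptyset$ for distinct $u,u'\in U^*_z$, which you need in order to write $\sum_{u\in U^*_z}|\bar\varphi(u)|+|\bar\varphi(x)|+|\bar\varphi(y)|+|\bar\varphi(z)|\le\Delta$. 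The paper invokes Lemma~\ref{broom} (the Chen–Chen–Zhao simple-broom lemma), checking its hypotheses $|\bar\varphi(x)\cup\bar\varphi(y)|\ge 4$ (from $d(x)\le\Delta-2$) and $\min\{d(x),d(z)\}<\Delta$, to get the whole set elementary in one shot; from there the floor bound on $|U_z|$ follows exactly as in (1), with the three deficiencies at $x,y,z$ summing to the numerator $d(x)+d(y)+d(z)-2\Delta-2$. Without the broom lemma your argument for (3) does not close.
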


\begin{proof}
Since $xy$ is a critical edge of $G$,   $\bar{\varphi}(x)\cap \bar{\varphi}(y)=\emptyset$.  Let $Z_y := \{z \in N(x) \setminus \{y\}\ : \  \phiv(xz) \in \phibar(y)\}$.  Clearly, $Z \subseteq Z_y$ and $|Z_y| = \D -d(y) +1$.  Since $\{y, x\}\cup Z_y$ forms a Vizing fan with center $x$,    it is elementary, so
$|\phibar(x)| +  |\phibar(y)| + \sum_{z\in Z_y}|\bar{\varphi}(z)|\leq \Delta$ holds.  Since $|\phibar(x)|  = \D -d(x) +1$ and $|\phibar(y)| = \D -d(y) +1$,  we have
\begin{equation}\label{Zy}
\sum_{z\in Z_y}|\bar{\varphi}(z)| \le \D - |\phibar(x)| - |\phibar(y)| \le d(x) + d(y) -\D -2.
\end{equation}
Since $d(z) \le q$ for all $z\in Z_y-Z$, $\sum_{z\in Z_y}|\bar{\varphi}(z)|  \geq(|Z_y|-|Z|)(\Delta-q)$.   Solving for $|Z|$, we get
$|Z|\geq |Z_y| -\left\lfloor\frac{d(x)+d(y)-\Delta-2}{\Delta-q}\right\rfloor.$
Since $|Z_y| = \D -d(y) +1$, inequality (1) holds.

Plugging $|\phibar(z)| = \D -d(z)$ for each $z\in Z_y$ in inequality (\ref{Zy}), we get
\[
\sum_{z\in Z_y} d(z) \ge |Z_y| \D -(d(x) + d(y) -\D -2).
\]
Since $d(z) \le q$ for every $z\in Z_y -Z$, we have
\[
 \sum_{z\in Z}(d(z) - q) \ge \sum_{z\in Z_y}(d(z) -q)
 \ge  |Z_y| \D -(d(x) +d(y) -\D -2) - |Z_y|q.
 \]
 \\
Plugging $|Z_y| =  \D - d(y) +1$, we get (2).

For each $z\in Z$,  let $U^*_z=\{ u\in N(z)\backslash\{x\}\ : \  \varphi(zu)\in \bar{\varphi}(x)\cup\bar{\varphi}(y)\backslash\{\varphi(xz)\} \}$ and
$U_z = \{ u\in U^*_z \ : \ d(u) > q\}$.  Clearly, $|U^*_z| = 2\D -d(x) -d(y) +1$ and $\{y, x, z\}\cup U^*_z$ forms a simple broom.
 Since $d(x)<q\leq \Delta-1$, we have $d(x)\leq \Delta-2$. Thus $|\bar{\varphi}(x)\cup\bar{\varphi}(y)|\geq 4$ and $\min\{d(x),d(z)\}=d(x)<\Delta$.  By Lemma \ref{broom},
 $\{y,x,z\}\cup U^*_z$ is elementary with respect to $\varphi$. So
 \[
 \sum_{u\in U^*_z} |\bar{\varphi}(u)| + |\bar{\varphi}(x)| + |\bar{\varphi}(y)|+|\bar{\varphi}(z)|\leq \Delta,
 \]
  which in turn gives
$\sum_{u\in U^*_z}|\bar{\varphi}(u)| \le d(x)+d(y)+d(z)-2\Delta-2$.
Since $d(u) \le q$ for every $u\in U^*_z - U_z$,
$\sum_{u\in U_z^*} |\phibar(u)| \ge (|U_z^*| -|U_z|)(\Delta-q)$.
So,
\[
(|U_z^*| -|U_z|)(\Delta-q) \le d(x)+d(y)+d(z)-2\Delta-2.
\]
Solving the above inequality with $|U_z^*| = 2\D -d(x) -d(y) +1$,
we get
\[
|U_z| \ge  2\D -d(x) -d(y) +1 - \left\lfloor\frac{d(x)+d(y)+d(z)-2\Delta-2}{\Delta-q}\right\rfloor.
\]
Since $\sigma_q(x,z) \ge |U_z|$, the inequality (3) holds.
 \end{proof}

\subsection{Proof of Lemma \ref{ppp}}

\noindent \textbf{Lemma \ref{ppp}.}
Let $xy$ be an edge in an  edge-$\Delta$-critical graph $G$ and
$q$ be a positive number.   If  $\D/2 < q \le \D -d(x)/2 -2$,   then
$x$ has at least
 $\Delta-\sigma_q(x,y)-2$ vertices $z\in N(x)\setminus\{y\}$ such that   $\sigma_q(x,z)\geq 2\Delta-d(x)-\sigma_q(x,y)-4$.

\begin{proof}
Let graph $G$, edge $xy\in E(G)$ and $q$ be defined as  in Lemma~\ref{ppp}.  A neighbor $z\in N(x)\setminus \{y\}$ is called {\it feasible} if there exits a coloring $\phiv\in \mathcal{C}^{\D}(G-xy)$ such that $\phiv(xz) \in \phibar(y)$, and such a coloring $\phiv$ is called {\it $z$-feasible}. Denote by
 $\mathcal{C}_z$ the set of all $z$-feasible colorings.
 For each  $\phiv\in \mathcal{C}_z$.  let
\begin{eqnarray*}
Z(\phiv)  & = & \{v\in N(z)\setminus\{x\} \,:\,  \varphi(vz)\in \bar{\varphi}(x)\cup \bar{\varphi}(y)\}, \\
C_z(\phiv) & = & \{\varphi(vz) \,:\, v\in Z(\phiv) \mbox{ and } d(v) < q\}, \\
Y(\phiv) & =& \{v\in N(y)\setminus\{x\}\,:\,  \varphi(vy)\in \bar{\varphi}(x)\cup \bar{\varphi}(z)\}, \mbox{ and } \\
C_y(\phiv)  & = & \{\varphi(vy) \,:\,  v\in Y(\phiv) \mbox{ and } d(v) < q\}.
\end{eqnarray*}
Note that $Z(\phiv)$ and $Y(\phiv)$ are vertex sets while
$C_z(\phiv)$ and $C_y(\phiv)$ are color sets.
For each color $k\in \phiv(z)$, let $z_k \in N(z)$ such that $\phiv(zz_k) = k$. Similarly, we define $y_k$ for each $k\in \phiv(y)$. Let
$T(\phiv) = \{k \in \phiv(x)\cap\phiv(y)\cap \phiv(z) \,:\,
d(y_k) < q \mbox{ and } d(z_k) < q \}.$

Since $G$ is edge-$\D$-critical, $\{x, y, z\}$ is elementary with respect to $\phiv$.  So $\phibar(x)$, $\phibar(y)$, $\phibar(z)$ and $\phiv(x)\cap \phiv(y)\cap \phiv(z)$ are mutually exclusive and
$$
\phibar(x)\cup \phibar(y) \cup \phibar(z)\cup (\phiv(x)\cap\phiv(y)\cap \phiv(z)) = \{1, 2, \dots, \D\}.
$$
Recall that  $\sigma_q(x,y)$ and $\sigma_q(x,z)$ are number of vertices with degree $\ge q$ in $N(y)\setminus\{x\}$ and $N(z)\setminus \{x\}$, respectively. So, the following inequalities hold.
\begin{eqnarray*}
&&\sigma_q(x,y) + \sigma_q(x,z) \\
& \ge & |Y(\phiv)|-|C_y(\phiv)| + |Z(\phiv)| -|C_z(\phiv)| + |\phiv(x)\cap \phiv(y)\cap\phiv(z)| -|T(\phiv)|\\
& = & |\phibar(x)\cup\phibar(z)| + |\phibar(x)\cup\phibar(y)|-1 +
|\phiv(x)\cap\phiv(y)\cap \phiv(z)| - |C_y(\phiv)| -|C_z(\phiv)| -|T(\phiv)|\\
& = & \D + |\phibar(x)| - |C_y(\phiv)| -|C_z(\phiv)| -|T(\phiv)|-1\\
& = &
2\D -d(x) +1 -|C_y(\phiv)| -|C_z(\phiv)| -|T(\phiv)|-1
\end{eqnarray*}
So, Lemma~\ref{ppp} follows the three statements below.
\begin{itemize}
\item [{\bf I.}] For any $\phiv\in \mathcal{C}_z$, $|C_z(\phiv)| \le 1$ and $|C_y(\phiv)| \le 1$;
\item [{\bf II.}] there exists a $\phiv\in \mathcal{C}_z$ such that
$|T(\phiv)| \le 2$; and
\item [{\bf III.}] there are $\D -\sigma_q(x,y) -2$ feasible vertices $z\in N(x)\setminus\{y\}$.
\end{itemize}

For every $z$-feasible coloring $\phiv\in \mathcal{C}^{\D}(G-xy)$, let $\phiv^d\in \mathcal{C}^{\D}(G-xz)$ obtained from $\phiv$ by assigning $\phiv^d(xy) = \phiv(xz)$ and keeping all colors on other edges unchange.  Clearly, $\phiv^d$ is a $y$-feasible coloring and $Z(\phiv^d)=Z(\phiv)$, $Y(\phiv^d) = Y(\phiv)$, $C_z(\phiv^d) =C_z(\phiv)$ and $C_y(\phiv^d) = C_y(\phiv)$.  We call $\phiv^d$ the {\it dual coloring} of $\phiv$. Considering dual colorings, we see that some properties for vertex $z$  also hold for vertex $y$.

The condition $q \le \D -d(x)/2 -2$ implies
$2(\D-q) + (\D -d(x)) +1 > \D$. So, for any $\phiv\in \mathcal{C}^{\D}(G-xy)$,  every elementary set $X$ with  $x\in X$ contains at most one vertex with degree $\le q$.

Let $z\in N(x)\setminus\{y\}$ be a feasible vertex and $\phiv\in \mathcal{C}_z$. By the definition of $Z(\phiv)$,  $G[\{x, y, z\}\cup Z(\phiv)]$ contains a simple broom, so $\{x, y, z\}\cup Z(\phiv)$ is elementary with respect to $\phiv$. Consequently, it contains at most one vertex other than $x$ having degree $< q$. Thus, $|C_z(\phiv)| \le 1$.  By considering its dual $\phiv^d$, we have $|C_y(\phiv)| =|C_y(\phiv^d)|\le 1$. Hence, {\bf I} holds. The proofs of {\bf II} and {\bf III} are much more complicated.
In the remainder of the proof, we let $Z=Z(\phiv)$, $Y=Y(\phiv)$, $C_z = C_z(\phiv)$,  $C_y = C_y(\phiv)$, and $T = T(\phiv)$ if the coloring $\phiv$ is clearly referred. Let $R=C_z\cup C_y$ and $\phibar(x,R)= \phibar(x)\setminus R$. A  coloring $\phiv\in \mathcal{C}_z$ is called {\it optimal} if $|C_z|+|C_y|$ is maximum over all feasible colorings.

\subsubsection{Proof of {\bf II}.}
Suppose to the contrary: $|T| \ge 3$ for every $\phiv\in \mathcal{C}_z$.  Let $\phiv$ be an optimal feasible coloring and assume, without loss of generality, $\phiv(xz) =1$.

{\flushleft \bf Claim A.} For each $i\in \phibar(x,R)$ and
$k\in T$,  $P_x(i,k,\varphi )$ contains both $y$ and $z$.

\begin{proof} We first show that $z\in V(P_x(i,k,\varphi ))$. Otherwise, $P_z(i, k, \phiv)$ is disjoint with $P_x(i, k, \phiv)$. Let $\varphi^\prime=\varphi / P_z(i,k,\varphi )$.  Since $1 \ne i, k$, $\phiv^\prime$ is also feasible. Since colors in $R$ are unchanged and $d(z_k)< q$, $C_z(\phiv^\prime) = C_z\cup \{i\}$ and $C_y(\phiv^\prime) \supseteq C_y$, giving a contradiction to
the maximality of $|C_y| + |C_z|$.  By considering the dual $\phiv^d$, we can verify that $y\in V(P_x(i,k,\varphi ))$.
\end{proof}

Since $|T| \ge 3$, there are three colors  $k_1,k_2,k_3\in T$. Let \begin{eqnarray*}
V_T & = &\{z_{k_1},z_{k_2},z_{k_3}\}\cup\{y_{k_1},y_{k_2},y_{k_3}\},\\
W(\phiv) & = & \{u\in V_T\,:\, \phibar(u)\cap \phibar(x)\subseteq  R \},\\
M(\phiv) & = & V_T-W(\phiv)=\{u\in V_T \,:\, \phibar(u)\cap\phibar(x,R)\neq \emptyset\},\\
E_T & = & \{zz_{k_1}, zz_{k_2}, zz_{k_3}, yy_{k_1},
yy_{k_2}, yy_{k_3}\},\\
E_W(\phiv) & = & \{e\in E_T \,:\,  \mbox{ $e$ is incident to a vertex in $W(\phiv)$}\},\mbox{ and } \\
E_M(\phiv) & = & E_T-E_W(\phiv)=\{e\in E_T\,:\, \mbox{ $e$  is incident to a vertex
in $M(\phiv)$} \}.
\end{eqnarray*}
For convenience, we let $W=W(\phiv),M=M(\phiv),E_W=E_W(\phiv)$ and $E_M=E_M(\phiv)$.

We assume that $|E_W|$ is minimum over all optimal feasible coloring $\phiv$ and all sets of three colors in $T(\phiv)$. For each $v\in M$, pick a color $\alpha_v\in \phibar(v)\cap \phibar(x,R)$. Let $C_M=\{\alpha_v \,:\, v\in M\}$. Clearly, $|C_M| \le |M|$.  Note that $\{z_{k_1},z_{k_2},z_{k_3}\}\cap \{y_{k_1},y_{k_2},y_{k_3}\}$ may be not empty, $\frac{|E_W|}{2}\leq |W|\leq |E_W|$ and $\frac{|E_M|}{2}\leq |M|\leq |E_M|$.

{\bf Claim B.}
If there exist two vertices $u,v\in V_T$ and a color
 $\alpha \in \phiv(x)\setminus R$ such that
$\alpha \in \phibar(u)\cap \phibar(v)$, then there is an optimal feasible coloring $\phiv^*$ such that  $|E_W(\phiv^*)| \le |E_W|$ and $\{u, v\}\cap M(\phiv^*)\ne \emptyset$. Moreover, if $\phibar(x)\setminus (R\cup C_M) \ne \emptyset$, then $u$ or $v\in M$.

\begin{proof}
We first note that the condition of $d(x)$ and $q$ gives
\begin{equation}\label{phibar(x)}
|\phibar(x)| = \D - d(x)+1 \ge \D -2(\D -q) +5> 5.
\end{equation}

If $\{u, v\}\cap M \ne \emptyset$, we are done. Suppose $u, v\in W$. Let $\beta$ be an arbitrary color in $\phibar(x,R)$ with the preference that $\beta\in \phibar(x,R)\setminus C_M$ if the set is not empty. Since $|R|\le 2$ and (\ref{phibar(x)}), such a color $\beta$ exists. Since $u, v\in W$, we have $\beta \in \phiv(u)\cap \phiv(v)$. So, both $u$ and $v$ are endvertices of $(\alpha, \beta)$-chains. Assume without loss of generality $P_u(\alpha, \beta, \phiv)$ is disjoint with $P_x(\alpha, \beta, \phiv)$. We note that $\beta\in \phiv(y)\cap \phiv(z)$
since $\{x, y, z\}$ is an elementary set.

We first consider the case of $\alpha=1$. In this case,   $P_{x}(\alpha,\beta,\varphi)=P_{y}(\alpha,\beta,\varphi)$ holds; otherwise, $\phiv/P_x(\alpha,\beta, \phiv)$ would lead a $\D$-coloring of $G$.
Since $\phiv(xz)=1$, $z\in P_x(\alpha, \beta, \phiv)$.   So, $P_u(\alpha, \beta, \phiv)\cap \{x, y, z\} =\emptyset$.  Hence, coloring $\varphi^\prime=\varphi/ P_{u}(\alpha,\beta,\varphi)$ is  feasible, $C_y(\phiv^\prime) = C_y$, $C_z(\phiv^\prime) = C_z$ and $T(\phiv^\prime) = T$. So, $\phiv^\prime$ is also optimal, $u\in M(\phiv^\prime)$ and $|E_W(\phiv^\prime)| \le |E_W|$ with that the inequality holds if the other endvertex of $P_u(\alpha, \beta, \phiv)$ is not in $M$ or $\beta\notin R\cup C_M$.

We now suppose  $\alpha\in \varphi (x)\setminus (R \cup \{1\})$. So, both $\alpha$ and $\beta$ are not in $R\cup \{1\}$.
Let $\varphi^\prime=\varphi/ P_{u}(\alpha,\beta,\varphi)$. Then,
$\phiv^\prime$ is feasible (for $z$), $C_y(\phiv^\prime)=C_y$ and $C_z(\phiv^\prime) = C_z$. Thus,  $\varphi^\prime$ is still an optimal coloring and $\beta\in \overline{\varphi^\prime }(u)$. We have $|E_W(\phiv^\prime)| \le |E_W|$ and $u\in M(\phiv^\prime)$. By the minimality of $|E_W|$, we have the other endvertex  of $P_u(\alpha,\beta, \phiv)$ must be in $M$ and $\beta\in C_M$, which leads a contradiction to the minimality of $|E_W|$  if $\beta\notin R\cup C_M$.
\end{proof}

{\bf Claim C.}
There exist a color $k\in \{k_1, k_2, k_3\}$ and three distinct colors $i,j,\ell$ where $i,j\in \phibar(x,R)$ and $\ell\in \phibar(x,R)\cup\{1\}$ such that $i\in \phibar(z_k)$, $j\in \phibar(y_k)$ and $\ell \in \phibar(z_k)\cup \phibar(y_k)$.

\begin{proof} We first note that if there exist $i,j\in \phibar(x, R)$ such that $i\in  \phibar(z_k)$ and $j\in  \phibar(y_k)$, then $i\neq j$; for otherwise, by Claim A, the path $P_{x}(i,k,\varphi )$ contains three endvertices $x,z_k$ and $y_k$, a contradiction.

First we show that there exist $i,j\in \bar{\varphi }(x, R)$ and $k\in\{k_1,k_2,k_3\}$ such that $i\in \bar{\varphi }(z_k)$ and $j\in \bar{\varphi }(y_k)$.  Suppose not.  Then $|E_M|\leq 3$ and $|E_W|\geq 3$, which in turn give $|W|\ge \lceil\frac{3}{2}\rceil=2$ and $|M| \le |E_M|\leq 3$.   Let $u,v\in W$.
By (\ref{phibar(x)}), $|\phibar(x)|\ge 6 > |R|+|M|$. There exists a color
$\beta\in \phibar(x, R)\setminus C_M$.  Then, $\beta\in \varphi (u)\cap\varphi (v)$ as $u,v\in W$.

Since $|R| \le 2$,  we have
$$
|\phibar(u)\setminus R|+|\phibar(v)\setminus R| +|\phibar(x)| >2(\Delta-q -2) + \D -d(x) +1 \ge \D +1.
$$
 So, there is a color $\alpha$ shared by at least two of these three sets. Since $(\phibar(u)\setminus R)\cap \phibar(x) = \emptyset$ and $(\phibar(v)\setminus R)\cap \phibar(x) = \emptyset$, we have
$\alpha \in (\phibar(u)\setminus R)\cap(\phibar(v)\setminus R)\cap \phiv(x)$.
By Claim B, there exists an optimal feasible coloring $\phiv^\prime$ such that $|E_W(\phiv^\prime)| \le |E_W|$. Moreover, since $\beta\notin R\cup C_M$, the inequality holds which gives a contradiction to the minimality of $|E_W|$.

We now only need to show that additionally there exists another color $\ell\in \phibar(x,R)\cup \{1\}$ such that $\ell\in \phibar(y_k)\cup \phibar(z_k)$.  Suppose on the contrary that there is no such a color $\ell$.  Then the following equalities hold.
\[
\phibar(z_k)\cap \phibar(x,R)=\{ i\} \quad \mbox{ and }
\quad \phibar(y_k)\cap \phibar(x,R)=\{ j\}
\]
 Moreover, $1\notin \phibar(z_k)\cup \phibar(y_k)$. Since $|R|\le 2$,   the following inequalities hold.
\[
|\phibar(z_k)\setminus (R\cup\{i\})|+
|\phibar(y_k)\setminus  (R\cup\{j\})| +
|\phibar(x)\cup (R\cup \{1\})|
>2(\Delta-q-|R|-1) + \D -d(x)+2 \geq \D
\]
So, there is color $\alpha$ in two of the three sets. Since
$(\phibar(z_k)\setminus (R\cup\{i\})\cap (\phibar(x)\cup R\cup \{1\}) = \emptyset$ and
$(\phibar(y_k)\setminus (R\cup\{j\}))\cap (\phibar(x)\cup R\cup \{1\}) = \emptyset$, $\alpha \in \phibar(z_k)\cap \phibar(y_k)\cap \phiv(x)\setminus (R\cup\{i, j, 1\})$.

Since $|\phibar(x)| =\D -d(x) +1 \ge \D -2(\D-q) +5 >5$, there exists a color $\beta\in \phibar(x)\setminus (R\cup\{i, j\})$.
Then, $\beta\notin \phibar(z_k)\cup \phibar(y_k)$. So,
$\beta\in (\phibar(x)\cap \phiv(z_k)\cap \phiv(y_k))\setminus (R\cup \{i, j\})$.

Applying  Claim B with color $\alpha$, we obtain an optimal coloring $\phiv^\prime$ and $|E_W(\phiv^\prime)| \le |E_W|$, but color $\beta$ serves as the required color $\ell$, giving a contradiction.
\end{proof}

 Let $k$, $i$, $j$ and $\ell$ be as stated in Claim C.
 If $\ell\neq 1$, we consider  coloring obtained from $\phiv$ by interchange colors $1$ and $\ell$ for edges not on the path $P_x(1, \ell, \phiv)$, and rename it as $\phiv$.  So we may
assume $1\in \phibar(y_k)\cup \phibar(z_k)$.

We first consider the case  of $1\in \phibar(y_k)$. By Claim A, the paths $P_x(i,k,\varphi)$ and $P_x(j,k,\varphi)$ both contain $y,z$.  Since
$\phiv(yy_k)=\phiv(zz_k)=k$,  these two paths   also contain $y_k,z_k$. Since $i\in \bar{\varphi }(z_k)$, we have $x$ and $z_k$ are the two endvertices of $P_x(i, k, \phiv)$. So, $i\in \phiv(y)\cap\phiv(z)\cap\phiv(y_k)$.  Similarly, we have $j\in \phiv(y)\cap \phiv(z)\cap \phiv(z_k)$.
We now consider the following sequence of colorings of $G-xy$.

 Let $\phiv_1$ be obtained from $\phiv$ by assigning $\phiv_1(yy_k) = 1$.  Since $1$ is missing at both $y$ and $y_k$,  $\phiv_1$ is an edge-$\D$-coloring of $G-xy$. Now $k$ is missing at $y$ and $y_k$, $i$ is still missing at $z_k$.  Since $G$ is not $\D$-colorable, $P_x(i,k,  \phiv) = P_y( i, k, \phiv)$; otherwise $\phiv/P_y( i, k, \phiv)$ can be extended to an edge-$\D$-coloring of $G$ giving a contradiction.  Furthermore, $z_k, y_k\notin V(P_x(i, k, \phiv_1))$ since either $i$ or $k$ is missing at these two vertices, which in turn shows that $z\notin V(P_x(i, k, \phiv_1))$ since $\phiv_1(zz_k) =k$.

Let $\varphi _2=\varphi _1/P_x(i,k,\varphi_1 )$. We have $k\in \bar{\varphi _2}(x), i \in \bar{\varphi _2}(y)\cap\bar{\varphi _2}(z_k)$ and $ j\in \bar{\varphi _2}(x)\cap\bar{\varphi _2}(y_k)$. Since $G$ is not edge-$\D$-colorable, $P_x(i, j, \phiv_2) = P_y(i,j, \phiv_2)$ which  contains neither  $y_k$ nor $z_k$.

Let $\phiv_3 = \phiv_2/P_x(i, j, \phiv_2)$.  Then $k\in \bar{\varphi _3}(x)$ and $j\in \bar{\varphi _3}(y)\cap \bar{\varphi _3}(y_k)$.

Let $\varphi _4$ be obtained from $\varphi _3$ by recoloring $yy_k$ by $j$.  Then $1\in \bar{\varphi _4}(y)$, $\varphi _4(xz)=1$, $k\in \bar{\varphi _4}(x)$, $\varphi _4(zz_k)=k$. Since $\phiv_4(xz) = 1\in \phibar_4(y)$,  $\phiv_4$ is feasible.
Since $i,j,k\notin R=C_y\cup C_z$, the colors in $R$ are unchanged during this sequence of re-colorings, so $C_y(\phiv_4) \supseteq C_y$ and $C_z(\phiv_4) \supseteq C_z$.  Since $\phiv_4(zz_k)=k\in \phibar_4(x)$ and $d(z_k) < q$, we have $k =\phiv_4(zz_k)\in C_z(\phiv_4)$. So, $C_z(\phiv_4) \supseteq C_z\cup \{k\}$. We therefore have $|C_y(\phiv_4)| + |C_z(\phiv_4)| \ge |C_y| + |C_z| +1$, giving a contradiction.

For  the case of  $1\in \bar{\varphi }(z_k)$, we consider the dual coloring $\phiv^d$ of $G-xz$ obtained  from $\varphi$ by uncoloring $xz$ and coloring $xy$ with color $1$.  Following the exact  same argument above, we can reach a contradiction to the maximum of $|C_y| + |C_z|$.  This completes the proof of {\bf II}.
 \end{proof}

\subsubsection{Proof of {\bf III}.}

Denote by $Z$ the set of all feasible vertices.  For a coloring $\phiv\in \mathcal{C}^{\D}(G-xy)$, let $Z(\phiv) =\{ z\in N(x)\,:\, \phiv(xz) \in \phibar(y)\}$ and $S(\phiv) = \{z\in N(x)\setminus Z(\phiv) \,:\,
d(y_{\phiv(xz)}) < q\}$, where $y_{j}\in N(y)$ with $\phiv(yy_j) = j$ for any color $j$.    We call vertices in $S(\phiv)$ {\it semi-feasile} vertices of $\phiv$.

\begin{cla}\label{cla4} For any coloring $\phiv\in \mathcal{C}^{\D}(G-xy)$, the following two inequalities hold.
\begin{itemize}
\item[{\bf a.}] $|Z(\phiv)\cup S(\phiv)| \ge \D -\sigma_q(x, y) -1$;

\item [{\bf b.}] \label{-2}  With one possible exception,  for all $z\in  S(\phiv)$ there exists a coloring $\phiv^*\in \mathcal{C}^{\D}(G-xy)$ such that
$\phiv^*(xz) \in \bar{\phiv^*}(y) $.
\end{itemize}
 \end{cla}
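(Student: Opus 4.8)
The plan is to prove \textbf{a} by counting against one Vizing fan at $y$, and \textbf{b} by a short list of Kempe–chain recolourings that reduce non‑feasibility to a rigid configuration of which at most one copy can occur.

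For \textbf{a} I would first record that, since $xy$ is critical, $\phibar(x)\cap\phibar(y)=\emptyset$, so $\phibar(x)\subseteq\phiv(y)$, $\phibar(y)\subseteq\phiv(x)$, and hence $|Z(\phiv)|=|\phibar(y)|=\D-d(y)+1$. Next I would look at the Vizing fan $(y,xy,x,yu_1,u_1,\dots,yu_m,u_m)$ at $y$, where $\{u_1,\dots,u_m\}=\{u\in N(y)\setminus\{x\}:\phiv(yu)\in\phibar(x)\}$ and $m=|\phibar(x)|=\D-d(x)+1$; by the classical result that the vertex set of a Vizing fan in an edge‑$\D$‑critical graph is elementary, $\sum_w|\phibar(w)|\le\D$ over this set. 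If $t$ of the $u_i$ have degree $<q$ then each contributes $|\phibar(u_i)|=\D-d(u_i)>\D-q$, so $\D\ge(\D-d(y)+1)+(\D-d(x)+1)+t(\D-q)$, i.e.\ $t(\D-q)\le d(x)+d(y)-\D-2\le d(x)-2$; since $q\le\D-d(x)/2-2$ gives $\D-q\ge d(x)/2+2$, this forces $t\le1$. Thus all but at most one of the $d(y)-1-\sigma_q(x,y)$ neighbours $u$ of $y$ (other than $x$) with $d(u)<q$ satisfy $\phiv(yu)\in\phiv(x)$, in which case $\phiv(yu)=\phiv(xz)$ for a unique $z\in N(x)\setminus\{y\}$, and that $z$ lies in $S(\phiv)$; distinct such $u$ give distinct $z$. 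Hence $|S(\phiv)|\ge d(y)-2-\sigma_q(x,y)$ and $|Z(\phiv)\cup S(\phiv)|=|Z(\phiv)|+|S(\phiv)|\ge\D-\sigma_q(x,y)-1$, which is \textbf{a}.

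For \textbf{b}, fix $z\in S(\phiv)$, put $j=\phiv(xz)$ and let $v=y_j$, so $\phiv(yv)=j$ and $d(v)<q$. I would show $z$ is feasible unless a rigid configuration holds, through two reductions. \textbf{(R1)} If $\phibar(v)\cap\phibar(y)\ne\emptyset$, pick $\gamma$ in it; then $P_y(\gamma,j,\phiv)$ is the single edge $yv$, and $\phiv/P_y(\gamma,j,\phiv)$ frees $j$ at $y$ while keeping $\phiv(xz)=j$, so $z$ is feasible. \textbf{(R2)} Suppose instead $\phibar(v)\cap\phibar(y)=\emptyset$. If some $\beta\in\phibar(y)$ has $x\notin P_y(\beta,j,\phiv)$, then $xz$ is off this chain as well, so $\phiv/P_y(\beta,j,\phiv)$ frees $j$ at $y$ without altering the colour of $xz$, and $z$ is feasible; and if some $\eta\in\phibar(v)$ and $\beta\in\phibar(y)$ (note $\eta\ne j$, and $v$ has a $\beta$‑edge) have $y\notin P_v(\eta,\beta,\phiv)$, then $\phiv/P_v(\eta,\beta,\phiv)$ recolours the $\beta$‑edge at $v$ to $\eta$, leaving $y,yv,xz$ intact, so $\beta$ becomes missing at both $v$ and $y$ and \textbf{(R1)} finishes the job. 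Consequently a non‑feasible $z\in S(\phiv)$ must be \emph{rigid}: $\phibar(v)\cap\phibar(y)=\emptyset$, every $\beta\in\phibar(y)$ puts $x$ (hence $v$) and $y$ on a common $(\beta,j)$‑chain, and every $(\eta,\beta)\in\phibar(v)\times\phibar(y)$ puts $v$ and $y$ on a common $(\eta,\beta)$‑chain.

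It then remains to show that at most one $z\in S(\phiv)$ is rigid; granting this, \textbf{III} follows immediately, since the feasible vertices contain $Z(\phiv)$ together with all but one element of $S(\phiv)$, hence number at least $|Z(\phiv)\cup S(\phiv)|-1\ge\D-\sigma_q(x,y)-2$. To rule out two rigid vertices $z_1,z_2$ with colours $j_1\ne j_2$ and $j_i$‑neighbours $v_i$ of $y$ of degree $<q$, I would fix some $\beta\in\phibar(y)$ and analyse how the two chains $P_y(\beta,j_1,\phiv)$ and $P_y(\beta,j_2,\phiv)$ --- each having $y$ as an endpoint with first edge $yv_i$ and each running through $x$ along the $\beta$‑edge at $x$ --- interlock, and then apply one further Kempe swap that exploits the large missing sets $|\phibar(v_i)|>\D-q$: the inequality $2(\D-q)+(\D-d(x))+1>\D$ (equivalent to $q\le\D-d(x)/2-2$) forces $\phibar(v_1),\phibar(v_2),\phibar(x)$ not to be pairwise disjoint, and the goal is to leverage that to create a colour of $\phibar(y)$ missing at $v_1$ or $v_2$, pushing one $z_i$ into \textbf{(R1)} and contradicting rigidity. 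This last step --- organising the interaction of the two $(\beta,j_i)$‑chains together with the constraints over all $\beta\in\phibar(y)$ and all $\eta\in\phibar(v_i)$ so that the auxiliary swap is guaranteed to manufacture a new common missing colour --- is the main obstacle; the reductions \textbf{(R1)}--\textbf{(R2)} and the counting in \textbf{a} are routine by comparison.
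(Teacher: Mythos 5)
Your proof of \textbf{a} is correct and matches the paper's: both arguments reduce \textbf{a} to showing that in the Vizing fan at $y$ formed by $xy$ together with the edges $yu$ with $\phiv(yu)\in\phibar(x)$, at most one $u$ can have degree $<q$, using elementarity and the numerical bound $|\phibar(x)|+|\phibar(y)|+2(\D-q)>\D$. For \textbf{b}, however, there is a genuine gap. Your reductions R1--R2 are sound, and your inequality $|\phibar(x)|+|\phibar(v_1)|+|\phibar(v_2)|>\D$ is exactly the paper's; but the decisive step --- proving that at most one $z\in S(\phiv)$ can be rigid, equivalently that for any two $z_k,z_\ell\in S(\phiv)$ at least one can be made feasible --- is left unproved, and you yourself flag it as ``the main obstacle.''

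The paper closes that gap directly and needs neither the interlocking-chain analysis of $P_y(\beta,j_1,\phiv)$ and $P_y(\beta,j_2,\phiv)$ nor your second R2 subcase. Writing $v_1=y_k$, $v_2=y_\ell$: if $\phibar(x)$ meets neither $\phibar(y_k)$ nor $\phibar(y_\ell)$, pigeonhole gives $r\in\phibar(y_k)\cap\phibar(y_\ell)\cap\phiv(x)$; fixing $i\in\phibar(x)$, at least one of $y_k,y_\ell$ lies off $P_x(i,r,\phiv)$, and swapping along its $(i,r)$-chain produces, say, $i\in\phibar(x)\cap\phibar(y_k)$ without disturbing the colours $k,\ell$ on $xz_k$, $xz_\ell$, $yy_k$, $yy_\ell$. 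Once $i\in\phibar(x)\cap\phibar(y_k)$, take any $\gamma\in\phibar(y)$: criticality of $xy$ forces $P_x(\gamma,i,\phiv)=P_y(\gamma,i,\phiv)$, and $y_k$ --- an endpoint of its own $(\gamma,i)$-chain since $i\in\phibar(y_k)$ --- must therefore be off that path; swapping along $P_{y_k}(\gamma,i,\phiv)$ brings $\gamma$ into $\phibar(y_k)$ while fixing $\phibar(x)$, $\phibar(y)$ and the colours $k,\ell$, after which your R1 (recolour $yy_k$ with $\gamma$) makes $z_k$ feasible. Until an argument of this kind is supplied, part \textbf{b} is not established.
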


\begin{proof} Let $\phiv\in \mathcal{C}^{\D}(G-xy)$.
Since $G$ is edge-$\Delta$-critical, $xy$ is an edge of $G$, it is easy to see that $\bar{\varphi}(y)\subseteq \varphi(x)$ and $\bar{\varphi}(x)\subseteq \varphi(y)$.
Divide $\phiv(y)$ into two subsets:
\[
\phiv(y, \ge q) =\{ i\in \phiv(y) \,:\, d(y_i) \ge q\}\quad  \mbox{ and } \quad
\phiv(y, <q) = \{ i \in \phiv(y)\,:\, d(y_i) < q\}.
\]

Clearly, $\sigma_q(x, y) = |\phiv(y, \ge q)|$ and
$|\phibar(y)| + |\phiv(y, <q)| = \D -\sigma_q(x,y)$. Since $\phibar(y)\subseteq \phiv(x)$, to prove {\bf a}, we only need to show $|\phibar(x)\cap \phiv(y, <q)| \le 1$.
Since  edge $xy$ and the edges incident to $y$ with colors in $\phibar(x)$ form a Vizing fan $F$, the vertex set $V(F)$ is elementary with respect to $\phiv$.
Since $d(x) +2q < 2\D$, $V(F)\setminus \{x\}$ contains at most one vertex with degree $< q$.  So  $|\phibar(x)\cap \phiv(y, <q)| \le 1$ holds.

To prove {\bf b}, we show that  for any two distinct vertices $z_{k},z_{\ell}\in S(\phiv)$, there is a coloring $\phiv^*\in \mathcal{C}^{\D}(G-xy)$ such that at least one of $\phiv^*(xz_k)$ and $\phiv^*(xz_{\ell})$ is in $\bar{\phiv^*}(y)$.  We assume $\phiv(xz_k) = k$ and $\phiv(xz_{\ell}) =\ell$.
Let $y_k, y_{\ell}\in N(y)\setminus \{x\}$ such that $\phiv(yy_k)=k$ and $\phiv(yy_{\ell}) =\ell$.

By the definition of $S(\phiv)$,  we have
 $d(y_{k})<q$ and  $d(y_{\ell})<q$. Since $\D/2 < q \le \D -d(x)/2 -2$, the following inequality holds.
 \begin{equation}\label{III>D}
 |\phibar(x)| + |\phibar(y_k)| + |\phibar(y_{\ell})| > \D
 \end{equation}
We claim that there exists a coloring $\phiv^*\in  \mathcal{C}^{\Delta}(G-xy)$ such that keeping the property  $\phiv^*(xz) =1\in \bar{\phiv^*}(y)$ and having the following property.
\begin{equation}\label{IIInon0}
\bar{\phiv^*}(x)\cap (\bar{\phiv^*}(y_{k})\cup \bar{\phiv^*}(y_{\ell})) \ne \emptyset
\end{equation}
Otherwise, by (\ref{III>D}), there exists $r\in \varphi (x)\cap \phibar(y_k)\cap \phibar(y_{\ell})$.  Choose a color $i\in  \bar{\varphi }(x) $.
Since at least one of colors $i$ and $r$ is missing at each of $x$, $y_k$ and $y_{\ell}$, we may assume  $P_{y_{k}}(i, r,\varphi )$ is disjoint with
 $P_{x}(i, r,\varphi )$.  Then, in coloring $\varphi /P_{y_{k}}(i, r,\varphi )$, color  $i$ is missing at both $x$ and $y_k$, giving a contradiction.

 By (\ref{IIInon0}), we may assume that there exists a color $i\in \phibar(x)\cap \phibar (y_{k})$.  Since $G$ is not edge-$\D$-colorable, $P_{x}(1, i, \varphi )=P_{y}(1, i, \varphi )$.  So, $P_{y_k}(1,i, \phiv)$ is disjoint with $P_x(1, i, \phiv)$.   If  $1\in \varphi (y_{k})$,  let $\varphi ^\prime=\varphi /P_{y_{k}}(1, i, \varphi )$. For coloring $\phiv^\prime$, we  have
 $1\in \bar{\phiv^\prime}(y_{k})$ and $\varphi^\prime (x)=\varphi(x), \varphi^\prime (y)=\varphi (y)$. Thus we can assume
$1\in \bar{\varphi }(y_{k})$. Let $\varphi^*$ be a coloring obtained from $\varphi$ by recoloring $yy_{k}$ with color $1$.  Then,  $\phiv^*(xz_k) =k \in \bar{\phiv^*}(y)$.   This completes the proof of {\bf III}.
\end{proof}



\section{Proof of Theorem \ref{main}}

Let $G$ be an edge-$\Delta$-critical graph with $n$ vertices and $m$ edges.
   Clearly, $\avd(G)=2m/n$.  We assume $\D\geq 56$. Let $q := \min\{ \frac{2\sqrt{2}(\Delta-1)-2}{2\sqrt{2}+1}, \frac{3}{4}\D-2\}$, that is, $q=\frac{2\sqrt{2}(\Delta-1)-2}{2\sqrt{2}+1}$ if $\D\ge 66$ and $q=\frac{3}{4}\D-2$ if $56\le \D\le 65$.
We initially assign to each vertex $x$ of $G$ a charge $M(x) = d(x)$ and redistribute the charge according to the following rule:
\begin{itemize}
\item {\bf Rule of Discharge:} each ($>q$)-vertex $y$ distributes its surplus
charge of $d(y)-q$ equally among all ($<q$)-neighbors of $y$.
\end{itemize}
Denote by $M^\prime (x)$ the resulting charge on each vertex $x$. Clearly, $\sum_{x\in V(G)} M^\prime (x)=\sum_{x\in V(G)} M(x)=2m$.
Let $X_1 =\{x\in V(G) \,:\,  d(x)\leq 3q-2\Delta\}$.
We show that $M^\prime(x) \ge 2 + 2(\D-q)$ for all vertices in $X_1$ and $M^\prime(x) \ge q$ for all other vertices, which gives $\avd(G) \ge q-(3q-2\D-2)\frac{|X_1|}{n}$. We then show that $|X_1|/n$ is small in order to complete our proof.

Since $q=\min\{\frac{2\sqrt{2}(\Delta-1)-2}{2\sqrt{2}+1}, \frac{3}{4}\D-2\}$ and $\Delta\ge 56$, we have $\frac{\D +2}2<q<\frac{3\D}{4}$. Thus $q > \D -q +2 > 3q -2\D$.

\begin{cla}\label{claim 1}
If $d(x)\leq \Delta-q+2$, then $M^\prime (x)\geq d(x)+2(\Delta-q)$. Consequently, $M^\prime (x)\geq d(x)+2(\Delta-q)$ for each $x\in X_1$.
\end{cla}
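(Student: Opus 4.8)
The goal is to show that the vertex $x$ receives at least $2(\D-q)$ units of charge under the Rule of Discharge; since $M(x)=d(x)$, this gives $M'(x)\ge d(x)+2(\D-q)$. The ``consequently'' clause is then immediate, because $\tfrac{\D+2}{2}<q<\tfrac{3\D+2}{4}$ forces $3q-2\D<\D-q+2$, so every $x\in X_1$ satisfies $d(x)\le\D-q+2$.

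I would first record the two consequences of $d(x)\le\D-q+2$ that make the later lemmas available. Applying Vizing's Adjacency Lemma (Lemma~\ref{VAL}) to any edge $xy$ gives $\sigma_\D(x,y)\le d(y)-1$, hence $d(x)+d(y)\ge\D+2$; combined with the hypothesis this forces $d(y)\ge\D+2-d(x)\ge q$ for every neighbour $y$ of $x$. In particular $x$ is a $(<q)$-vertex (as $d(x)\le\D-q+2<q$), and every neighbour of $x$ is a $(>q)$-vertex, with the single exception — occurring only when $q$ is an integer — of a neighbour of degree exactly $q$, which forces $d(x)=\D-q+2$ and is dealt with separately. Moreover $q<\tfrac34\D$ gives $\D-q>\D/4\ge14$, so $d(x)\le\D-q+2\le2(\D-q)-4$, i.e.\ $q\le\D-\tfrac{d(x)}{2}-2$; together with $\D/2<q$ and $d(x)<q\le\D-1$ this is exactly the hypothesis needed to invoke Lemmas~\ref{ppp}, \ref{pp} and~\ref{lemfact} for $x$ with this value of $q$.

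For a $(>q)$-neighbour $y$ of $x$, the number of $(<q)$-neighbours of $y$ equals $\ell(y):=d(y)-\sigma_q(x,y)$ (the vertex $x$ itself being one of them), so $y$ donates $\frac{d(y)-q}{\ell(y)}$ to $x$. The engine of the proof is Lemma~\ref{lemfact}: choosing $y_0\in N(x)$ of minimum degree, fixing some $\varphi\in\mathcal{C}^{\D}(G-xy_0)$, and setting $Z=\{z\in N(x)\setminus\{y_0\}: d(z)>q,\ \varphi(xz)\in\bar\varphi(y_0)\}$, inequality (2) bounds $\sum_{z\in Z}(d(z)-q)$ from below, while inequality (3) gives, for each $z\in Z$,
\[
\ell(z)=d(z)-\sigma_q(x,z)\ \le\ d(z)+d(x)+d(y_0)-2\D-1+\left\lfloor\frac{d(x)+d(y_0)+d(z)-2\D-2}{\D-q}\right\rfloor .
\]
Hence $x$ receives at least $\frac{\sum_{z\in Z}(d(z)-q)}{\max_{z\in Z}\ell(z)}$ from the vertices of $Z$, to which we add the donation of $y_0$ itself when $d(y_0)>q$. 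I would then run a case analysis on $s:=d(x)+d(y_0)-\D-2\ (\ge0)$. When $s=0$ (so $y_0$, and in fact every vertex of $Z$, has degree $\D$) the floor terms vanish, one gets $\ell(z)=1$ and $|Z|\ge d(x)-1$, so each $z\in Z$ donates its whole surplus $\D-q$ and $x$ receives at least $(d(x)-1)(\D-q)\ge2(\D-q)$ once $d(x)\ge3$; the residual case $d(x)=2$ is settled directly from Lemma~\ref{VAL}, which forces both neighbours of $x$ to have degree $\D$ and, by the same computation applied to each, to donate all of their surplus. For $s\ge1$ the neighbours of $x$ carry strictly more surplus, which I would play against the (now weaker) bounds on $|Z|$ and $\ell(z)$, using $\D-q>14$ to keep the floor terms controlled and, where the contribution of $Z$ alone is not visibly enough, re-applying Lemma~\ref{lemfact} with a different neighbour of $x$ in the role of $y_0$ (or with $x$ and $y_0$ interchanged) and invoking Lemma~\ref{pp}.

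The delicate point — and where I expect the real work to lie — is the regime in which the minimum neighbour degree of $x$ is close to $\D$ (equivalently $s$ near its maximum $d(x)-2$), because there Lemma~\ref{lemfact} applied at $y_0$ is no stronger than Vizing's Adjacency Lemma, and the crude bound $\ell(z)\le d(x)-1$ yields only about $\frac{d(x)(\D-q)}{d(x)-1}$ units of charge, which falls just short of $2(\D-q)$ when $d(x)\ge3$. Overcoming this requires the finer structural control furnished by the broom/Kierstead machinery behind Lemma~\ref{lemfact} (and Lemma~\ref{broom}) — namely that a degree-$\D$ neighbour of $x$ has only a bounded number of $(<q)$-neighbours, essentially independently of $d(x)$, because more would violate the elementariness of a suitable simple broom or Tashkinov tree — together with a careful accounting of the $\lfloor(\cdot)/(\D-q)\rfloor$ terms. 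That is the technical heart of the argument; everything else is the bookkeeping sketched above.
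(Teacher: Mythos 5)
Your proposal diverges from the paper's route and, more importantly, does not close. The paper's proof of Claim~\ref{claim 1} does not use Lemma~\ref{lemfact} at all. The key observation you miss is the following: when $d(x)\le \Delta-q+2$, for every neighbour $y$ one has $2\Delta-d(x)-d(y)+2\ge \Delta-d(x)+2\ge q$, so the paper's $q$ is \emph{at most} Woodall's threshold, giving $\sigma_q(x,y)\ge\sigma(x,y)$ where $\sigma(x,y)=\sigma_{2\Delta-d(x)-d(y)+2}(x,y)$. This unlocks Woodall's Lemma~\ref{p} directly. The paper then sets $p(x)=\min\{p_{min}(x),\lfloor d(x)/2\rfloor-1\}$ and splits the neighbours of $x$ into the $d(x)-p(x)-1$ ``good'' ones (those with $\sigma(x,y)\ge\Delta-p(x)-1$, each donating at least $\frac{\Delta-q}{p(x)+1}$) and the remaining $p(x)+1$ ones (each donating at least $\frac{\Delta-q}{d(x)-p(x)-1}$ via the $p_{min}$ bound, valid once $q\le\Delta-d(x)+p(x)+1$, which holds for $p(x)\ge 1$). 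The total is $(\theta+\theta^{-1})(\Delta-q)\ge 2(\Delta-q)$ with $\theta=\frac{d(x)-p(x)-1}{p(x)+1}$; the degenerate case $p(x)=0$ is handled separately and trivially since then $d_{<q}(y)=1$ and $d(y)=\Delta$ for all (or all but one) neighbours. No appeal to Lemma~\ref{lemfact} is needed here --- that machinery is reserved for Claim~\ref{claim 2}, where $d(x)$ is large and $\sigma(x,y)$ no longer controls $\sigma_q(x,y)$.

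By contrast, you base everything on Lemma~\ref{lemfact}, pick $y_0$ of minimum degree, and try to control the charge through the floor terms. You correctly identify that when the minimum neighbour degree is close to $\Delta$ (your parameter $s$ near its maximum $d(x)-2$), Lemma~\ref{lemfact} degenerates to essentially Vizing's Adjacency Lemma and the crude bound yields only about $\frac{d(x)}{d(x)-1}(\Delta-q)$, which is strictly below $2(\Delta-q)$ once $d(x)\ge 3$. At that point you write that ``the technical heart'' is to be supplied by the broom/Tashkinov machinery, but you do not actually carry that out, and there is no reason to believe the path you sketch (``a degree-$\Delta$ neighbour of $x$ has only a bounded number of $(<q)$-neighbours, essentially independently of $d(x)$'') can be extracted from Lemma~\ref{lemfact} or Lemma~\ref{broom} without in effect rederiving Woodall's Lemma~\ref{p}, which is precisely what the paper invokes. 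So the proposal contains a genuine, unrepaired gap in exactly the regime you flag, and the fix is not more of the same machinery but rather the simpler observation $\sigma_q(x,y)\ge\sigma(x,y)$ that makes Woodall's lemma available.
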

\begin{proof}
Let $y$ be an arbitrary neighbor of $x$.  Since  $2\Delta-d(x)-d(y)+2\ge \D -d(x) +2 \geq q$,
we have $\sigma_q(x, y) \ge \sigma(x, y)$. We will use lower bounds of $\sigma(x,y)$ to estimate $\sigma_q(x,y)$.
 Following the definition  $p_{min}(x)=  min_{v\in N(x)}\sigma(x,v)-\Delta+d(x)-1$ and  $p (x)  =  \min\{p_{min}(x), \lfloor\frac{d(x)}2 \rfloor-1\}$,
we have the following inequalities.
\begin{equation}\label{eq1}
1\leq d_{<q}(y)\leq d(y)-\sigma(x,y)\leq d(y)-(\Delta-d(x)+p(x)+1)
\end{equation}

By Lemma \ref{p},  $x$ has at least $d(x)-p(x)-1$ neighbors $y$ for which $\sigma(x,y)\geq \Delta-p(x)-1$, so for these neighbors $y$ the following inequalities hold.
\begin{equation}\label{eq2}
1\leq d_{<q}(y)\leq d(y)-\sigma(x,y)\leq d(y)-(\Delta-p(x)-1)
\end{equation}

We first consider the case $p(x)\geq 1$.
In this case, we have  $q\leq \Delta-d(x)+2\leq \Delta-d(x)+p+1$.
Since $\frac{d(y)-a}{d(y)-b}$ with $a\leq b$ is a decreasing function of $d(y)$,
for each $y\in N(x)$,  $x$ receives charge at least
$$
\frac{ d(y)-q}{d(y)-(\Delta-d(x)+p(x)+1)}\geq \frac{\Delta-q}{d(x)-p(x)-1},
$$
And there are at least $d(x)-p(x)-1$ neighbors $y$ of $x$ giving $x$ at least
$$
\frac{ d(y)-q}{d(y)-(\Delta-p(x)-1)}\geq \frac{\Delta-q}{p(x)+1},
$$
where the inequality holds because $q\leq \Delta-d(x)+2\leq \Delta-p(x)-1$ as $1\leq p(x)\leq \lfloor\frac{d(x)}{2}\rfloor-1$.
Thus $x$ receives at least
$$
  (d(x)-p(x)-1)\frac{\Delta-q}{p(x)+1}+(p(x)+1)\frac{\Delta-q}{d(x)-p(x)-1}=(\theta+\theta^{-1})(\Delta-q)\geq 2(\Delta-q),
$$
where $\theta=\frac{d(x)-p(x)-1}{p(x)+1}$. It follows that
$M^\prime(x)\geq M(x)+2(\Delta-q)=d(x)+2(\Delta-q)$.

We now consider the case $p (x)  =  \min \{p_{min}(x), \lfloor\frac{d(x)}2 \rfloor -1\} =0$. If $d(x)=2$, then by (\ref{eq1}) for every neighbor $y$ of $x$ we have $d_{<q}(y)=1$ and $d(y)=\Delta$, thus $M^\prime(x)\geq M(x)+2(\Delta-q)=d(x)+2(\Delta-q)$. If $d(x)\geq 3$, then by (\ref{eq2}) for at least $d(x)-1$ neighbors $y$ of $x$,  we have $d_{<q}(y)=1$ and $d(y)=\Delta$.  Thus $M^\prime(x)\geq M(x)+(d(x)-1)(\Delta-q)\geq d(x)+2(\Delta-q)$.
\end{proof}

\begin{cla}\label{claim 2}
For each $x\in V(G)-X_1$, $M^\prime (x)\geq q$ holds.
\end{cla}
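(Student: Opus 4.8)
The plan is to check $M^\prime(x)\ge q$ by cases on $d(x)$. Two are immediate. If $d(x)\ge q$, then $x$ is not a $(<q)$-vertex, so by the Rule of Discharge it receives no charge and keeps at least $q$ of its initial charge $d(x)\ge q$; thus $M^\prime(x)\ge q$. If $3q-2\Delta<d(x)\le\Delta-q+2$, then Claim~\ref{claim 1} gives $M^\prime(x)\ge d(x)+2(\Delta-q)>(3q-2\Delta)+2(\Delta-q)=q$, using $d(x)>3q-2\Delta$ since $x\notin X_1$. So the only case needing work is $\Delta-q+2<d(x)<q$, and the rest treats a fixed such $x$. Write $\mu(v)$ for the number of $(<q)$-neighbors of a vertex $v$; if $v\in N(x)$ then $x$ is one of them, so $\mu(v)\ge1$, and counting the remaining neighbors of $v$ gives $\mu(v)=d(v)-\sigma_q(x,v)$. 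As $q<\Delta$, Lemma~\ref{VAL} applied to $xv$ gives $\sigma_q(x,v)\ge\sigma_\Delta(x,v)\ge\Delta-d(x)+1$, so $\mu(v)\le d(v)-\Delta+d(x)-1\le d(x)-1$ and $d(v)\ge\Delta-d(x)+2$; applied to $vx$ it shows $x$ has at least $\Delta-\delta_x+1\ge1$ neighbors of degree $\Delta$, where $\delta_x:=\min_{v\in N(x)}d(v)$. Since $M^\prime(x)=d(x)+\sum_{v\in N(x):\,d(v)>q}\frac{d(v)-q}{\mu(v)}$, it suffices to show this sum is at least $q-d(x)$.

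I would split the remaining work by whether Lemma~\ref{pp} applies, i.e.\ whether $d(x)\le2(\Delta-q)-4$ (equivalently $q\le\Delta-\tfrac{d(x)}{2}-2$; note $q>\Delta/2$ holds throughout). If it does, Lemma~\ref{pp} supplies at least $d(x)-p(x,q)-3$ neighbors $v$ with $\sigma_q(x,v)\ge\Delta-p(x,q)-5$, so $d(v)>q$ and $\frac{d(v)-q}{\mu(v)}\ge\frac{\Delta-q}{p(x,q)+5}$; moreover, when $p(x,q)$ reaches its cap $\lfloor d(x)/2\rfloor-3$ one gets the stronger fact that \emph{every} $v\in N(x)$ has $\sigma_q(x,v)\ge\Delta-\lceil d(x)/2\rceil-2$ (hence $\mu(v)\le\lceil d(x)/2\rceil+2$), while if $p(x,q)=p_{min}(x,q)$ then all $v$ satisfy $\mu(v)\le d(x)-p(x,q)-1$. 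I would feed these bounds — together with the estimate $\ge\Delta-\delta_x+1$ for the number of degree-$\Delta$ neighbors of $x$, which is large exactly when the generic bound $\mu\le d(x)-1$ is weak — into $\sum_v\frac{d(v)-q}{\mu(v)}$ and verify $d(x)+\sum_v\frac{d(v)-q}{\mu(v)}\ge q$ over the admissible range; the extremal configuration is $d(x)$ near $\Delta-q+3$ with $p(x,q)$ maximal, where the inequality reduces to one in $\Delta-q$ that holds given $q\le\tfrac34\Delta-2$ and $\Delta\ge56$.

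When $d(x)>2(\Delta-q)-4$ I would argue as follows. If $\delta_x=\Delta$, then all neighbors of $x$ are $(>q)$-vertices with $\mu\le d(x)-1$, so the charge is at least $\frac{d(x)(\Delta-q)}{d(x)-1}$, and $d(x)+\frac{d(x)(\Delta-q)}{d(x)-1}\ge q$ is equivalent to $d(x)\bigl(d(x)-1+\Delta-2q\bigr)+q\ge0$, which holds since $d(x)>2(\Delta-q)-4\ge2q-\Delta+1$ — the last inequality being $q\le\tfrac{3\Delta-5}{4}$, valid because $q\le\tfrac34\Delta-2$. If $\delta_x<\Delta$, I would fix a neighbor $y_0$ of $x$ with $d(y_0)=\delta_x$ and a coloring $\varphi\in\mathcal{C}^\Delta(G-xy_0)$, and apply Lemma~\ref{lemfact} with our $q$ (note $d(x)<q\le\Delta-1$) to the set $Z$ of $(>q)$-neighbors $z\ne y_0$ of $x$ with $\varphi(xz)\in\bar\varphi(y_0)$: its three inequalities make $|Z|$ large, $\sum_{z\in Z}(d(z)-q)$ large, and $\mu(z)=d(z)-\sigma_q(x,z)$ small, the bound on $\mu(z)$ shrinking as $\delta_x$ decreases (recall $\delta_x\ge\Delta-d(x)+2$). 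Summing $\frac{d(z)-q}{\mu(z)}$ over $Z$ and discarding the other neighbors would then give $d(x)+\text{charge}\ge q$ for our $q$ and $\Delta\ge56$.

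I expect the main obstacle to be the concluding arithmetic in these cases — a rational inequality in $d(x)$ and $\Delta$ (and in $p(x,q)$, resp.\ $\delta_x$ and the degrees $d(z)$) to be checked over the full ranges. The delicate configurations are the transitional ones: $d(x)$ too large for Lemma~\ref{pp}'s saturated bound yet too small for the bare Vizing estimate to suffice, and $\delta_x$ close to $\Delta$ so that the third inequality of Lemma~\ref{lemfact} barely improves on Lemma~\ref{VAL}. It is exactly these that pin down the threshold $\Delta\ge56$ and force $q\le\tfrac34\Delta-2$; the sharper value $q=\tfrac{2\sqrt2(\Delta-1)-2}{2\sqrt2+1}$ used when $\Delta\ge66$ is dictated by the later bound on $|X_1|/n$, not by this claim.
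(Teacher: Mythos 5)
Your outline coincides with the paper's proof in the first two cases ($d(x)\ge q$ and $d(x)\le\Delta-q+2$), and your Case~A ($d(x)\le 2(\Delta-q)-4$, via Lemma~\ref{pp}) is essentially the paper's Case~3; there the apparent worry that some neighbors might have degree $\le q$ is harmless, because the bound $\sigma_q(x,v)\ge\Delta-d(x)+p(x,q)+1$ already forces $d(v)>q$ in the sub-case $q\le\Delta-d(x)+p(x,q)+1$, and the other sub-case only uses the $d(x)-p(x,q)-3$ neighbors whose large $\sigma_q$ makes $d(v)>q$ automatic. Your Case B1 ($\delta_x=\Delta$) is also correct.

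The gap is in Case B2 ($d(x)>2(\Delta-q)-4$, $\delta_x<\Delta$). You propose to apply Lemma~\ref{lemfact} with $y_0$ of minimum degree, sum $\frac{d(z)-q}{\mu(z)}$ over $Z$, and \emph{discard the other neighbors}. This fails when $\delta_x$ is close to $\Delta$ but still below it. Concretely, if $d(y_0)=\Delta-1$ then $|\bar\varphi(y_0)|=2$, so $|Z|\le 2$; each term in the $Z$-sum is at most $\Delta-q$ and its useful lower bound is on the order of $\frac{\Delta-q}{\mu(z)}$ with $\mu(z)$ of size up to $d(x)+d(y_0)-\Delta+1\approx d(x)$. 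The lower bound from Lemma~\ref{lemfact}(2) on $\sum_{z\in Z}(d(z)-q)$ becomes $2(\Delta-q)-d(x)+3$, which is negative once $d(x)>2(\Delta-q)+3$ — a permissible value since $d(x)$ ranges up to nearly $q$. So the $Z$-charge is bounded by a small constant while the deficit $q-d(x)$ is $\Theta(\Delta)$; the inequality $d(x)+\text{charge}\ge q$ does not follow. The paper's corresponding case (its Case~2: $d(x)>2(\Delta-q)-4$ and every neighbor of degree $>q$) avoids this precisely by also summing, over $u\in N(x)\setminus Z_q^*$, the quantity $\frac{d(u)-q}{d(u)-(\Delta-d(x)+1)}$ obtained from Vizing's Adjacency Lemma, and shows this extra sum is at least $d(y)-q$; that contribution supplies essentially all of the charge when $d(y)$ is near $\Delta$. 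This step requires knowing $d(u)>q$ for every $u\in N(x)$, which is why the natural split in that regime is "some $(\le q)$-neighbor" versus "all $(>q)$-neighbors," not $\delta_x=\Delta$ versus $\delta_x<\Delta$. Your plan as written omits the VAL contribution from $N(x)\setminus Z$, and that omission is fatal in the regime $q<\delta_x<\Delta$.
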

\begin{proof}
Let $x\in V(G) -X_1$, i.e., $d(x) > 3q-2\D$.  If $d(x)\geq q$, then $M^\prime(x)=M(x)-\frac{d(x)-q}{d_{<q}(x)}d_{<q}(x)=q.$
If $3q-2\Delta<d(x)\leq \Delta-q+2$, then by Claim \ref{claim 1}, we have $M^\prime(x)\geq d(x)+2(\Delta-q)>q$.
So we only need to consider the case $\Delta-q+2<d(x)<q$.

Since $G$ is edge-$\D$-critical and $xy\in E(G)$, there exists a coloring $\phiv\in \mathcal{C}^{\D}(G-xy)$. Let $Z_q=\{z\in N(x)\,:\, d(z)>q\}$, $Z_y=\{z\in N(x)\backslash \{y\}\,:\, \phiv(xz)\in \phibar(y)\}$ and $Z_q^*=Z_q\cap Z_y$. Clearly, for each $z\in Z_q$, $x$ receives at least $\frac{d(z)-q}{d(z)-\sigma_{q}(x,z)}$ charge. Thus $M^\prime (x) \geq d(x)+\sum_{z\in Z_q}\frac{d(z)-q}{d(z)-\sigma_{q}(x,z)}$.
We consider the following three cases to complete the proof.

{\flushleft \textbf{Case 1.}} $\Delta-q+2<d(x)<q$ and $x$ has a neighbor $y$ such that $d(y)\leq q$.

By Lemma \ref{lemfact} (3), for each vertex $z\in Z_q^*$ we have
\[
\sigma_{q}(x,z)  \geq  2\Delta-d(x)-d(y)+1-\lfloor\frac{d(x)+d(y)+d(z)-2\Delta-2}{\Delta-q}\rfloor
\ge 2\D -d(x) -d(y),
\]
where we used the inequality $\lfloor\frac{d(x)+d(y)+d(z)-2\Delta-2}{\Delta-q}\rfloor\leq 1$ following
 $d(x)<q, d(y)\leq q$, $d(z)\leq \D$ and $q <\frac{3}{4}\D$. Thus  $\sigma_{q}(x,z)  \geq  2\Delta-d(x)-d(y)$. So, $M^\prime (x) \geq M(x)+\sum_{z\in Z_q^*}\frac{d(z)-q}{d(z)-(2\Delta-d(x)-d(y))}$.
 By Lemma \ref{lemfact} (2), we have $\sum_{z\in Z_q^*}(d(z)-q)\geq (\Delta-d(y)+1)(\Delta-q)-d(x)-d(y)+\Delta+2$. Thus
\begin{eqnarray*}
\sum_{z\in Z_q^*}\frac{d(z)-q}{d(z)-(2\Delta-d(x)-d(y))}  &\ge&  \frac{(\Delta-q)(\Delta-d(y)+1)-(d(x)+d(y)-\Delta-2)}{d(x)+d(y)-\Delta}\\
&\ge& \frac{(\Delta-q)(\Delta-d(y)+1)+2}{d(x)+d(y)-\Delta}-1.
\end{eqnarray*}
So, we have the following inequality.
\begin{eqnarray*}
  M^\prime (x) &\geq & M(x)+\sum_{z\in Z_q^*}\frac{d(z)-q}{d(z)-(2\Delta-d(x)-d(y))} \\
   &\geq & d(x)+\frac{(\Delta-q)(\Delta-d(y)+1)+2}{d(x)+d(y)-\Delta}-1 \\
  &\geq & d(x)+q-\D+\frac{(\Delta-q)(\Delta-q+1)+2}{d(x)+q-\Delta}-1-q+\D\\
   &\geq & 2\sqrt{(\Delta-q)(\Delta-q+1)+2}+\Delta-q-1\\
   &\geq & 3(\Delta-q) \geq  q.
\end{eqnarray*}

{\flushleft \textbf{Case 2.}} $2(\Delta-q)-4< d(x)<q$ and $d(y)> q$ for every neighbor $y$ of $x$.

Let  $y\in N(x)$ such that $d(y):=\min \{d(u)\,:\, u\in N(x)\}$.
By Lemma \ref{lemfact} (3), for each vertex $z\in Z_q^*$ we have
\begin{eqnarray*}
\sigma_{q}(x,z)  & \geq & 2\Delta-d(x)-d(y)+1-\left\lfloor\frac{d(x)+d(y)+d(z)-2\Delta-2}{\Delta-q}\right\rfloor
\\
& \ge & 2\D -d(x) -d(y) -1,
\end{eqnarray*}
where we used the inequality
$\lfloor\frac{d(x)+d(y)+d(z)-2\Delta-2}{\Delta-q}\rfloor\leq 2$ when
$d(x)<q$ and $q <\frac{3}{4}\D$.
By Lemma \ref{lemfact} (2), we have
\begin{eqnarray*}
\sum_{z\in Z_q^*}\frac{d(z)-q}{d(z)-\sigma_q(x,y)} & \ge &  \frac{ (\Delta-d(y)+1)(\Delta-q)-d(x)-d(y)+\Delta+2}{d(x)+d(y)-\Delta+1}\\
& = & q-\D-1+\frac{(\Delta-q)(d(x)+2)+3}{d(x)+d(y)-\Delta+1}.
\end{eqnarray*}

By Lemma \ref{VAL}, for each neighbor $u$ of $x$ we have $\sigma_{\D}(x,u)\ge \Delta-d(x)+1$. Since $d(u) \ge d(y)$ for each $u\in N(x)$ and $q\geq \Delta-d(x)+1$, we have
 \[
 \frac{d(u)-q}{d(u)-(\Delta-d(x)+1)}\geq \frac{d(y)-q}{d(y)-(\Delta-d(x)+1)}.
 \]
 So,
 \begin{eqnarray*}
 && \sum_{u\in N(x)\backslash Z_q^*}\frac{d(u)-q}{d(u)-(\Delta-d(x)+1)}
  \ge   |N(x) \setminus Z_y| \cdot \frac{d(y)-q}{d(y)-(\Delta-d(x)+1)}\\
 && =  \frac{(d(x)-(\D-d(y)+1)) (d(y)-q)}{d(y)-(\Delta-d(x)+1)}=d(y)-q.
 \end{eqnarray*}
    Thus
\begin{eqnarray*}
  M^\prime (x) &\geq & d(x)+q-\D-1+\frac{(\Delta-q)(d(x)+2)+3}{d(x)+d(y)-\Delta+1}
  + d(y)-q\\
   &= & d(x)+d(y)-\Delta+1+\frac{(\Delta-q)(d(x)+2)+3}{d(x)+d(y)-\Delta+1}-2\\
   &\geq & 2\sqrt{(\Delta-q)(d(x)+2)+3}-2\\
   &>&  2\sqrt{2(\Delta-q)(\Delta-q-1)}-2\geq  q.
\end{eqnarray*}

{\flushleft \textbf{Case 3.}} $\Delta-q+2< d(x)\leq 2(\Delta-q)-4$ and $d(y)> q$ for every neighbor $y$ of $x$.

Since $\D\ge 56$, we have $\Delta-q+2< 2(\Delta-q)-4$, so this case occurs.
 Since the notation $p(x,q)$ will be used heavily in this proof, we let $p^\prime := p(x,q)$ for convenience.
 So, $p^\prime=  \min \{\ p_{min}(x,q), \lfloor\frac{d(x)}{2}\rfloor-3\ \}$, where
$p_{min}(x,q) := \min_{y\in N(x)}\sigma_q(x,y)-\Delta+d(x)-1 $.
Following this definition, for every $y\in Z_q$,  $\sigma_q(x,y)\geq \Delta-d(x)+p^\prime+1$, which in turn gives
\[
\frac{d(y) - q}{d(y) -\sigma_q(x,y)} \ge \frac {d(y) - q}{d(y) - (\D -d(x) + p^\prime +1)}.
\]
So, if $q \le \D -d(x) +p^\prime +1$, then
\[
\frac{d(y) - q}{d(y) -\sigma_q(x,y)} \ge \frac {\D - q}{d(x) -p^\prime -1}.
\]

By Lemma \ref{pp}, $x$ has at least $d(x)-p^\prime-3$ neighbors $y$ for which $\sigma_q(x,y)\geq\Delta-p^\prime-5$.  For such neighbors $y$, since $q \le \D -\frac{d(x)}2 -2 \le \D -p^\prime -5$, we have
 \[
 \frac{d(y) - q}{d(y) -\sigma_q(x,y)} \ge \frac{d(y) -q}{d(y) - (\D -p^\prime -5)} \ge \frac{\D -q} {p^\prime +5}.
 \]
 If $q\leq \Delta-d(x)+p^\prime+1$, then
 \begin{eqnarray*}
 M^\prime(x) & \geq &
 d(x)+(d(x)-p^\prime-3)\frac{\D -q}{p^\prime +5}+(p^\prime+3)\frac{\D-q}{d(x)-p^\prime-1}\\
 & \ge & \D -q +2 + (\D -q) (2 -\frac{8(d(x)+2)}{(\Delta-q+2)(d(x)+4)}) \ge q,
 \end{eqnarray*}
where we used the inequality $\theta+\theta^{-1}\ge 2$ ($\theta=\frac{d(x)-p^\prime-3}{p^\prime+5}$) to show the following
 $$\frac{d(x)-p^\prime-3}{p^\prime+5}+\frac{p^\prime+3}{d(x)-p^\prime-1}>2-\frac{2d(x)+4}{(\frac{\Delta-q+2}{2})(\frac{d(x)}{2}+2)}=2-\frac{8(d(x)+2)}{(\Delta-q+2)(d(x)+4)}.$$

Suppose $q> \Delta-d(x)+p^\prime+1$, i.e.,  $p^\prime< d(x)+q-\Delta-1$. So,
 $\frac{d(x)-p^\prime-3}{p^\prime+5}>\frac{\D-q-2}{d(x)+q-\D+4}$, which gives
\begin{eqnarray*}
M^\prime(x) & \geq &  d(x) +\frac{\D-q-2}{d(x)+q-\D+4}(\D-q)\\
& = & (d(x)+q-\Delta+4)+\frac{(\Delta-q)(\Delta-q-2)}{d(x)+q-\D+4}-(q-\Delta+4)\\
& \ge &  2\sqrt{(\Delta-q)(\Delta-q-2)}+\Delta-q-4\geq  3(\Delta-q)-8 \geq  q.
\end{eqnarray*}
\end{proof}

\begin{cla}\label{claim 3}
$d(y)> q$ for each $y\in N(X_1)$ and $|N(X_1)|\geq 2|X_1|$ where $N(X_1)=\cup_{x\in X_1}N(x)$.
\end{cla}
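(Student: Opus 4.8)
The plan is to prove the two assertions separately, both by exploiting that every $x\in X_1$ is a $(<q)$-vertex with the very small degree bound $d(x)\le 3q-2\D$. Recall from the discussion preceding Claim~\ref{claim 1} that $3q-2\D<\D-q+2<q$, so every $x\in X_1$ satisfies $d(x)\le \D-q+2$, and in particular Vizing's Adjacency Lemma (Lemma~\ref{VAL}) applies in its cleanest form. First I would handle the degree statement: let $x\in X_1$ and $y\in N(x)$. Since $G$ is edge-$\D$-critical, Lemma~\ref{VAL} gives $\sigma_\D(x,y)\ge \D-d(x)+1$; but $\sigma_\D(x,y)$ counts neighbors of $y$ other than $x$, so $d(y)\ge \sigma_\D(x,y)+1\ge \D-d(x)+2$. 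Using $d(x)\le 3q-2\D$ we get $d(y)\ge \D-(3q-2\D)+2=3\D-3q+2$. It then suffices to check $3\D-3q+2>q$, i.e. $q<\tfrac{3\D+2}{4}$, which holds since $q<\tfrac34\D$ as established just before Claim~\ref{claim 1}. This proves $d(y)>q$ for every $y\in N(X_1)$.

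Next I would prove $|N(X_1)|\ge 2|X_1|$. The natural route is a counting / Hall-type argument on the bipartite incidence structure between $X_1$ and $N(X_1)$. The key input is Lemma~\ref{ppp} (or more conveniently its consequence, and Lemma~\ref{lemfact}), which forces each vertex in $X_1$ to have many high-degree neighbors and, crucially, forces those high-degree neighbors to themselves have many high-degree neighbors — so a single vertex of $N(X_1)$ cannot ``absorb'' too many vertices of $X_1$. Concretely, for $x\in X_1$ and $y\in N(x)$ we have (since $d(y)>q$) by Lemma~\ref{lemfact}(3), applied with this $q$, a strong lower bound on $\sigma_q(x,y)$; but $\sigma_q(x,y)\le d(y)-1\le \D-1$, and comparing these shows $d(x)$ cannot be too small relative to how many common structures $x$ and $y$ share. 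I would aim to show that for each $y\in N(X_1)$, the number of neighbors of $y$ lying in $X_1$ is at most $d(y)-q < \D - q$ times a controlled factor; more to the point, I expect to show each $x \in X_1$ has at least, say, two neighbors, and more strongly that the neighborhoods $N(x)$, $x\in X_1$, cannot all crowd onto fewer than $2|X_1|$ vertices because each such shared vertex $y$ is forced by Vizing-type adjacency constraints (Lemma~\ref{VAL} and Lemma~\ref{lemfact}) to have at most $d(y)-q$ neighbors in $X_1$ while $d(y)\le\D$.

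The cleanest formalization is probably: build a bipartite graph $H$ with parts $X_1$ and $N(X_1)$ and all edges of $G$ between them; then $e(H)=\sum_{x\in X_1} d(x)$ (every neighbor of $x\in X_1$ lies in $N(X_1)$, and by the first part has degree $>q$ so is not in $X_1$ itself), while for each $y\in N(X_1)$ the degree of $y$ into $X_1$ is bounded by the number of $(<q)$-neighbors of $y$, which is at most $d(y)-q$ by the discharging constraint — no, more sharply, bounded via the elementary-set / Vizing-fan argument that appears repeatedly above. To get the factor $2$ I would want each $x\in X_1$ to have $d(x)\ge 2$ (immediate, since $\D$-critical graphs have minimum degree $\ge 2$) together with an averaging inequality showing $\sum_{x\in X_1}d(x) \ge 2|N(X_1)| \cdot (\text{something} \le 1)$ fails unless $|N(X_1)|\ge 2|X_1|$. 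The main obstacle I anticipate is pinning down the exact per-vertex bound on the number of $X_1$-neighbors of a given $y\in N(X_1)$ sharply enough: a naive bound $d(y)-q$ is too weak when $d(y)$ is close to $\D$, so I expect the real work is to invoke Lemma~\ref{lemfact}(1)--(2) (with $q$ playing its role and $d(x)$ tiny) to argue that if many vertices of $X_1$ were adjacent to a common $y$, the elementary Vizing fan at $y$ would exceed $\D$ missing-color slots, a contradiction — essentially re-running the ``$V(F)\setminus\{x\}$ contains at most one $(<q)$-vertex'' idea from the proof of Claim~\ref{cla4}, but now centered at $y$ and counting the $X_1$-vertices among its low-degree neighbors.
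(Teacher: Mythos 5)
Your argument for the first assertion, that $d(y)>q$ for every $y\in N(X_1)$, is correct and is essentially the paper's: the paper applies the standard consequence of Vizing's Adjacency Lemma that $d(x)+d(y)\ge\Delta+2$ for any edge $xy$ of an edge-$\Delta$-critical graph, so $d(y)\ge\Delta+2-(3q-2\Delta)=3\Delta-3q+2>q$ since $q<\tfrac34\Delta$; your derivation via $\sigma_\Delta(x,y)\ge\Delta-d(x)+1$ gives the same inequality.

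For the second assertion, $|N(X_1)|\ge 2|X_1|$, your sketch has a genuine gap and does not take the paper's route. You aim to bound, for each $y\in N(X_1)$, the number of its $X_1$-neighbors via Lemma~\ref{lemfact} or Vizing-fan elementarity, and then count edges between $X_1$ and $N(X_1)$. But you never produce the required per-vertex bound, and even a bound of the type ``at most one $(<q)$-vertex per fan'' (the only one readily available from the elementary-set argument) does not convert into $|N(X_1)|\ge 2|X_1|$ by a simple averaging: writing $\sum_{y\in N(X_1)}a(y)=\sum_{x\in X_1}d(x)$ where $a(y)$ is the number of $X_1$-neighbors of $y$, one needs $a(y)$ small compared to $d(x)/2$ on average, which your sketch does not establish. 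The paper instead avoids this entirely and uses the discharging machinery already in place. Since every vertex of $N(X_1)$ has degree $>q$, it receives no charge, and every vertex of $X_1$ (being a $(<q)$-vertex) receives charge only from its neighbors, all of which lie in $N(X_1)$. Hence the total final charge on $X_1\cup N(X_1)$ is at most the total initial charge on that set:
\[
\sum_{x\in X_1}M'(x)+\sum_{y\in N(X_1)}M'(y)\;\le\;\sum_{x\in X_1}d(x)+\Delta\,|N(X_1)|.
\]
On the other hand, Claims~\ref{claim 1} and~\ref{claim 2} give $M'(x)\ge d(x)+2(\Delta-q)$ for $x\in X_1$ and $M'(y)\ge q$ for $y\in N(X_1)$, so the same sum is at least $\sum_{x\in X_1}d(x)+2(\Delta-q)|X_1|+q\,|N(X_1)|$. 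Comparing, $2(\Delta-q)|X_1|\le(\Delta-q)|N(X_1)|$, i.e.\ $|N(X_1)|\ge 2|X_1|$. This is a short, purely local bookkeeping argument; to complete your proposal you should replace the bipartite-counting attempt with this charge-conservation step, which is the intended use of the discharging rule and of Claims~\ref{claim 1}--\ref{claim 2}.
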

\begin{proof}
Since $G$ is edge-$\D$-critical, for each edge $xy\in E(G)$ we have
$d(x) + d(y) \ge \D +2$.
Since $q<\frac{3}{4}\D$ and $d(x)\le 3q-2\D$ for each $x\in X_1$, we have $d(y)\geq \Delta+2-(3q-2\Delta)>q$ for each $y\in N(x)$.
Thus the vertices in $N(X_1)$ does not receive charges from any other vertices.
As the vertices in $X_1$ receive charges only from the vertices in $N(X_1)$, we have
\begin{equation}\label{eq5}
\sum_{x\in X_1}M^\prime (x)+\sum_{y\in N(X_1)}M^\prime (y)\leq \sum_{x\in X_1}M(x)+\sum_{y\in N(X_1)}M(y)\leq \sum_{x\in X_1}d(x)+\Delta|N(X_1)|.
\end{equation}
Also, by Claims \ref{claim 1} and \ref{claim 2}, we have $M^\prime (x)\geq d(x)+2(\Delta-q)$ for each $x\in X_1$ and $M^\prime (y)\geq q$ for each $y\in N(X_1)$. Thus we have
\begin{equation}\label{eq6}
\sum_{x\in X_1}M^\prime (x)+\sum_{y\in N(X_1)}M^\prime (y)\geq \sum_{x\in X_1}d(x)+2(\Delta-q)|X_1|+q|N(X_1)|.
\end{equation}
Combining (\ref{eq5}) with (\ref{eq6}), we have $|N(X_1)|\geq 2|X_1|$.
\end{proof}

For each edge $xy\in E(G)$ and $\phiv\in \mathcal{C}^{\D}(G-xy)$, let $Y(x,\phiv)=\{w\in N(y)\backslash \{x\}\, :\, \phiv(yw)\in \bar{\phiv}(x)\}$ , $Y^1(x,\phiv)=Y(x,\phiv)\cap N(X_1)$ and $Y^2(x,\phiv)=Y(x,\phiv)-(X_1\cup N(X_1))$.
Clearly, $|Y(x,\phiv)|=\D-d(x)+1$.
Note that with respect to the coloring $\phiv$, $\{x,y\}\cup Y(x,\phiv)$ forms a Vizing fan, so it is elementary.
\begin{cla}\label{claim 4}
For each $y\in N(X_1)$ and $x\in N(y)\cap X_1$, $|Y^2(x,\phiv)|\ge \Delta-2d(x)+3$.
\end{cla}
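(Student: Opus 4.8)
The plan is to count the colors in $\bar\phiv(x)$ according to the membership of the corresponding neighbor of $y$ in the three classes $X_1$, $N(X_1)\setminus X_1$, and $V(G)\setminus(X_1\cup N(X_1))$. Write $Y=Y(x,\phiv)$, $Y^1=Y^1(x,\phiv)$, $Y^2=Y^2(x,\phiv)$, and let $Y^0=Y\setminus(Y^1\cup Y^2)$ be the part of $Y$ lying in $X_1$. Since $\{x,y\}\cup Y$ is a Vizing fan, it is elementary, so in particular each vertex of $Y$ is a distinct neighbor of $y$ and $|Y|=\D-d(x)+1$. Hence $|Y^2|=|Y|-|Y^1|-|Y^0| = \D-d(x)+1-|Y^1|-|Y^0|$, and it suffices to bound $|Y^1|+|Y^0|$ from above by $d(x)-2$.

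First I would bound $|Y^0|$. A vertex $w\in Y^0$ lies in $X_1$, so $d(w)\le 3q-2\D < q$ by the running hypothesis $q<\tfrac34\D$; in particular $d(w)<\D$. But $w$ is also a neighbor of $y$, and $y\in N(X_1)$, so by Claim~\ref{claim 3} $d(y)>q>\tfrac{\D+2}{2}$. The key point is that $\{x\}\cup Y\cup\{y\}$ elementary together with the small degrees of $x$ and of any vertex of $X_1$ forces $Y$ to contain at most one vertex of $X_1$: since $d(x)\le 3q-2\D$ and every $w\in X_1$ has $|\bar\phiv(w)|=\D-d(w)\ge 2\D-3q+? $ large, two such vertices plus $x$ would violate $\sum_{v}|\bar\phiv(v)|\le\D$ over the elementary set — this is exactly the kind of counting used in Claim~\ref{cla4}{\bf a} and in the ``at most one $(<q)$-vertex'' observations earlier in Section~2. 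So $|Y^0|\le 1$, and in fact one can absorb this exceptional vertex: the cleanest route is to note $x\in X_1$ itself, so if $Y^0\ne\emptyset$ we already have two vertices of $X_1$ (namely $x$ and that vertex) inside the elementary set $\{x\}\cup Y$, which the degree count rules out; hence $Y^0=\emptyset$.

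Next, the heart of the argument is bounding $|Y^1|$, the number of $w\in Y$ with $w\in N(X_1)$. Each such $w$ is a $(>q)$-vertex adjacent to some vertex of $X_1$. I would use Vizing's Adjacency Lemma / the elementarity of the Vizing fan to show: among the $\D-d(x)+1$ vertices of $Y$, at most $d(x)-2$ can be neighbors of the set $X_1$. The mechanism is that $x\in X_1$ has degree $\le 3q-2\D$, which is small, so $|\bar\phiv(x)|$ is large; the edges of color $\bar\phiv(x)$ at $y$ reach $Y$; and a vertex of $X_1$ other than $x$ adjacent to some $w\in Y$ would, via the Kierstead path / simple broom $(\,\cdot\,, x, y, w, \cdot\,)$ of Lemma~\ref{broom} or Lemma~\ref{p4}, extend the elementary set and again overload the color-count $\sum|\bar\phiv(\cdot)|\le\D$. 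Carefully run this for each $w\in Y^1$: each forces a ``used up'' color, and since $x$ itself sits in $X_1$ with its missing-color budget already committed, at most $d(x)-2$ slots remain. Combining, $|Y^1|+|Y^0|\le d(x)-2$, whence $|Y^2|\ge \D-d(x)+1-(d(x)-2)=\D-2d(x)+3$.

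The main obstacle I anticipate is making the second step rigorous: precisely which colors in $\bar\phiv(x)$ get blocked by a given $w\in Y^1$, and showing these blocked colors are distinct across different $w$, so that the count is genuinely $|Y^1|$ and not merely a fraction of it. This is where one must invoke the simple-broom elementarity (Lemma~\ref{broom}) — or possibly a Tashkinov-tree argument in the spirit of the introduction — rather than bare Vizing fans, because the relevant tree $(\,z,\,xz,\,x,\,xy,\,y,\,yw,\,w\,)$ with $z\in X_1$, $z\ne x$, is exactly a Kierstead path, and its elementarity (guaranteed since $d(x)<\D$) is what yields the contradiction. The degree inequalities needed, $q<\tfrac34\D$ and $d(x)\le 3q-2\D$, should be enough to close every counting estimate, but verifying the ``$-2$'' (as opposed to $-1$ or $-3$) will require tracking the contributions of $\bar\phiv(x)$, $\bar\phiv(y)$, and the one exceptional small-degree vertex simultaneously.
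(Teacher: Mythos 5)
Your setup and first step match the paper: you correctly observe that the Vizing fan $\{x,y\}\cup Y(x,\phiv)$ is elementary, deduce via the inequality $\sum_{v\in\{x,y,w\}}|\phibar(v)|\le\D$ that every $w\in Y(x,\phiv)$ has $d(w)\ge 2\D-d(x)-d(y)+2>3q-2\D$, hence $Y(x,\phiv)\cap X_1=\emptyset$, and then reduce the claim to showing $|Y^1(x,\phiv)|\le d(x)-2$.

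However, the reduction $|Y^1(x,\phiv)|\le d(x)-2$ is the entire substance of the claim, and your proposal does not contain a working argument for it — you acknowledge this yourself, and the mechanism you sketch points in the wrong direction. You suggest that each $w\in Y^1$ ``blocks'' a distinct color of $\phibar(x)$ and that the Kierstead path $(z,x,y,w)$ with $z\in X_1$ overloads the elementary set. But the Kierstead-path elementarity (Lemma~\ref{p4}(4)) applied to $\{x,y,w,z\}$ does \emph{not} rule out the existence of such a $z$; it only constrains the color of the edge $wz$. That is exactly the content of the paper's Subclaim~\ref{claim 4}.1: if $z\in N(w)\cap X_1$, then $\phiv(wz)$ must lie in $\phiv(x)\cap\phiv(y)$, since otherwise $|\phibar(z)\cap(\phibar(x)\cup\phibar(y))|\le 1$ forces $d(z)\ge 2\D-d(x)-d(y)+1>3q-2\D$, contradicting $z\in X_1$. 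So the right bookkeeping set is $\phiv(x)\cap\phiv(y)$ (of size at most $d(x)-2$), not $\phibar(x)$ as you propose. The paper then sets, for each $j\in\phiv(x)\cap\phiv(y)$, $Y_j=\{w\in Y^1:j\in\phiv(w)\}$ and $Z_j=\{z\in X_1: \phiv(wz)=j \text{ for some }w\in Y_j\}$, observes $\sum_j|Z_j|\ge|Y^1|$, and reduces to proving $|Z_j|\le 1$. That last step (Subclaim~\ref{claim 4}.2) is a genuinely new Kempe-chain argument not anticipated in your sketch: for each $k\in\phibar(x)$ one shows, by flipping $P_{z}(j,k,\phiv)$ and again invoking the Kierstead path, that all but one or two vertices of $Z_j$ must see color $k$; summing over $k\in\phibar(x)$ gives a lower bound on $\sum_{z\in Z_j}d(z)$ that, combined with $d(z)\le 3q-2\D$, forces $|Z_j|<2$. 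Without Subclaim~\ref{claim 4}.1, the color decomposition by $\phiv(x)\cap\phiv(y)$, and the Kempe-chain counting of Subclaim~\ref{claim 4}.2, the bound $|Y^1|\le d(x)-2$ does not follow, so the proposal as written has a real gap at its central step.
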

\begin{proof}
Recall that $\{x,y\}\cup Y(x,\phiv)$ is elementary. Then for each vertex in $w\in Y(x,\phiv)$ we have $|\bar{\phiv}(w)|+|\bar{\phiv}(x)|+|\bar{\phiv}(y)|\le \D$, it follows that $d(w)\ge |\bar{\phiv}(x)|+|\bar{\phiv}(y)|>3q-2\D$. Thus we have $Y(x,\phiv)\cap X_1=\emptyset$. If $|Y^1(x,\phiv)|\le d(x)-2$, then $|Y^2(x,\phiv)|=|Y(x,\phiv)-Y^1(x,\phiv)-(Y(x,\phiv)\cap X_1)|\ge \D-d(x)+1-(d(x)-2)\ge \Delta-2d(x)+3$.
So the Claim \ref{claim 4} is equivalent to show that $|Y^1(x,\phiv)|\le d(x)-2$.

\textbf{Subclaim \ref{claim 4}.1.} If $w\in Y^1(x,\phiv)$, then for each neighbor $z$ of $w$ in $X_1$, we have $\varphi(wz)\in \varphi(x)\cap \varphi(y)$.

\begin{proof} If there exists a neighbor of $w$ in $X_1$, say $z$, such that $\varphi(wz)\notin \varphi(x)\cap \varphi(y)$. Then $\varphi(wz)\in \bar{\varphi}(x)\cup \bar{\varphi}(y)$. Thus $\{x,y,w,z\}$ forms a Kierstead path. By Lemma \ref{p4}, we have $|\bar{\varphi}(z)\cap (\bar{\varphi}(x)\cup \bar{\varphi}(y))|\leq 1$, it follows that $d(z)\geq (\Delta-d(x)+1)+(\Delta-d(y)+1)-1> 3q-2\Delta$, this contradicts with the fact that $z\in X_1$. So Subclaim \ref{claim 4}.1. holds.
\end{proof}

For each color $j\in \varphi(x)\cap \varphi(y)$, set $Y_j=\{w\in Y^1(x,\phiv)\, :\, j\in \varphi(w)\}$ and $Z_j=\{z\in X_1: \mbox{\ there exists a vertex \ } w\in Y_j \mbox{\ such that \ } \varphi(wz)=j\}$. Clearly, $\sum_{j\in \varphi(x)\cap \varphi(y)}|Z_j|\ge |Y^1(x,\phiv)|$. Since $|\varphi(x)\cap \varphi(y)|=\D-(\D-d(x)+1)-(\D-d(y)+1)\le d(x)-2$, to show that $|Y^1(x,\phiv)|\le d(x)-2$, we only need to prove that $|Z_j|\leq 1$ for each $j$.
Let $|Z_j|=t$ and $Z_j=\{z_{\alpha_1},\cdots,z_{\alpha_t}\}$, where for each $z_{\alpha_i}$ there exists $y_{\alpha_i}\in Y_j$ such that $\phiv(yy_{\alpha_i})=\alpha_i$ and $\phiv(y_{\alpha_i}z_{\alpha_i})=j$. Clearly, $\alpha_i\in \bar{\varphi}(x)$ for each $1\le i\le t$.

\textbf{Subclaim \ref{claim 4}.2.} Let $k$ be a color in $\bar{\varphi}(x)$. Then the followings hold.

(1) For each $k\notin \{\alpha_1,\cdots,\alpha_t\}$, at least $t-1$ vertices of $Z_j$ have the color $k$.

(2) For each $k\in \{\alpha_1,\cdots,\alpha_t\}$, at least $t-2$ vertices of $Z_j$ have the color $k$.
\begin{proof}
First suppose that $k\notin \{\alpha_1,\cdots,\alpha_t\}$. We consider the path $P_x(j,k,\varphi)$, and $w$ is the other end vertex of this path. We will show that the color $k$ seen by each vertex in $Z_j\backslash\{w\}$. For otherwise, we assume $k\notin \varphi(z)$ for some $z\in Z_j\backslash\{w\}$, say $z=z_{\alpha_1}$, then $P_{z_{\alpha_1}}(j,k,\varphi)$ is disjoint from $P_x(j,k,\varphi)$, thus let $\varphi^\prime =\varphi/P_{z_{\alpha_1}}(j,k,\varphi)$ be the new coloring which $\varphi^\prime(yy_{\alpha_1})=\varphi(yy_{\alpha_1})\in \bar{\varphi^\prime}(x)$ and $\varphi^\prime(y_{\alpha_1}z_{\alpha_1})=k \in \bar{\varphi^\prime}(x)$. Thus $\{x,y,y_{\alpha_1},z_{\alpha_1}\}$ forms a Kierstead path. So by Lemma \ref{p4} we have $|\bar{\varphi}(z_{\alpha_1})\cap (\bar{\varphi}(x)\cup \bar{\varphi}(y))|\leq 1$, it follows that $d(z_{\alpha_1})\geq (\Delta-d(x)+1)+(\Delta-d(y)+1)-1> 3q-2\Delta$ as $d(x)\le 3q-2\D$, this contradicts with the fact that $z_{\alpha_1}\in X_1$.

Then suppose that $k\in \{\alpha_1,\cdots,\alpha_t\}$. We may assume that $k=\alpha_t$. Clearly, $k\notin \{\alpha_1,\cdots,\alpha_{t-1}\}$. Let $Z^\prime_j=Z_j-\{z_k\}$. By (1), we have at least $|Z^\prime_j|-1$ vertices of $Z^\prime_j$ has the color $k$, that is, at least $t-2$ vertices of $Z_j$ has the color $k$.
\end{proof}
By Subclaim \ref{claim 4}.2 and the definition of $X_1$, we have
$$(\Delta-d(x)+1-t)(t-1)+t(t-2)\leq \sum_{z\in Z_j}d(z)\leq t(3q-2\Delta).$$
Since $d(x)\le 3q-2\D$ and $q<\frac{3}{4}\D$, we have
\begin{equation*}
 t \leq  \frac{\Delta-d(x)+1}{3\Delta-3q-d(x)} \leq 1+ \frac{3q-2\Delta+1}{5\Delta-6q}< 2.
\end{equation*}
Since $t$ is an integer, we have $t\leq 1$. Then Claim \ref{claim 4} holds.
\end{proof}

Let $c$ be a positive integer, set $Z_1(c)=\{z\in V(G)-(X_1\cup N(X_1))\,:\,d(z)\geq \Delta-c\}$ and $Z_2(c)=\{z\in V(G)-(X_1\cup N(X_1))\,:\,d(z)< \Delta-c\}$.

\begin{cla}\label{claim 5}
$|Z_1(c)|\geq \frac{(5c+2)\Delta-(6c+3)q+3c+2}{c\Delta}|N(X_1)|$.
\end{cla}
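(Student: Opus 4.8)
The plan is to run a two-sided count of the edges joining the disjoint vertex sets $N(X_1)$ and $Z_1(c)$, deriving the lower bound from Claim~\ref{claim 4} and the upper bound from the trivial degree bound $\D$. If $X_1=\emptyset$ then $N(X_1)=\emptyset$ and the inequality is vacuous, so assume $X_1\neq\emptyset$. By Claim~\ref{claim 3} the sets $N(X_1)$ and $Z_1(c)\cup Z_2(c)=V(G)\setminus(X_1\cup N(X_1))$ are disjoint and every vertex of $N(X_1)$ has degree greater than $q$; write $e(N(X_1),Z_1(c))$ for the number of edges with one end in each set.

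\smallskip

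For the lower bound, I would fix each $y\in N(X_1)$, choose a neighbor $x=x_y\in N(y)\cap X_1$, and choose any $\phiv=\phiv_y\in\mathcal{C}^{\D}(G-xy)$ (which exists since $G$ is edge-$\D$-critical). By Claim~\ref{claim 4}, $Y^2(x,\phiv)$ consists of at least $\D-2d(x)+3\ge 5\D-6q+3$ neighbors of $y$ lying in $Z_1(c)\cup Z_2(c)$, where we used $d(x)\le 3q-2\D$. The point is that only boundedly many of these can fall into $Z_2(c)$: since $\{x,y\}\cup Y(x,\phiv)$ is an elementary Vizing fan centered at $y$, its vertices have pairwise disjoint missing-color sets, so
\[
\sum_{w\in Y(x,\phiv)}|\phibar(w)|\;\le\;\D-|\phibar(x)|-|\phibar(y)|\;=\;d(x)+d(y)-\D-2\;\le\;3q-2\D-2,
\]
the last inequality using $d(x)\le 3q-2\D$ and $d(y)\le\D$. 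Each $w\in Y^2(x,\phiv)\cap Z_2(c)$ has $|\phibar(w)|=\D-d(w)\ge c$, so $|Y^2(x,\phiv)\cap Z_2(c)|\le (3q-2\D-2)/c$, and therefore $y$ has at least
\[
(5\D-6q+3)-\frac{3q-2\D-2}{c}\;=\;\frac{(5c+2)\D-(6c+3)q+3c+2}{c}
\]
neighbors in $Z_1(c)$. Summing over $y\in N(X_1)$ gives $e(N(X_1),Z_1(c))\ge \frac{(5c+2)\D-(6c+3)q+3c+2}{c}\,|N(X_1)|$; reassuringly this already matches the constant in the statement.

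\smallskip

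For the upper bound, every $z\in Z_1(c)$ has degree at most $\D$, so it is incident with at most $\D$ edges to $N(X_1)$, giving $e(N(X_1),Z_1(c))\le\D\,|Z_1(c)|$. Comparing the two estimates and dividing by $\D$ yields the claim. The only step that is not routine bookkeeping is the bound on $|Y^2(x,\phiv)\cap Z_2(c)|$, and this is exactly where the elementary Vizing fan supplied by Claim~\ref{claim 4} is used, forcing a budget of at most $3q-2\D-2$ missing colors to be shared among the low-degree members of $Y^2(x,\phiv)$; the slack between $|\phibar(w)|\ge c$ and the sharper inequality $|\phibar(w)|\ge c+1$ for $w\in Z_2(c)$ is harmless and only makes the printed bound marginally weaker than what the argument delivers.
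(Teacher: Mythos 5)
Your proposal is correct and follows essentially the same argument as the paper: you invoke Claim~\ref{claim 4} to get the $\D-2d(x)+3$ neighbors in $Y^2(x,\phiv)$, use the elementarity of the Vizing fan $\{x,y\}\cup Y(x,\phiv)$ to cap the number of those falling in $Z_2(c)$, and then double-count the edges between $N(X_1)$ and $Z_1(c)$ with the trivial upper bound $\D|Z_1(c)|$. The only cosmetic difference is that you substitute $d(x)\le 3q-2\D$ and $d(y)\le\D$ earlier than the paper does, which changes nothing.
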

\begin{proof}
For each $y\in N(X_1)$, $x\in N(y)\cap X_1$ and $\phiv\in \mathcal{C}^{\D}(G-xy)$, let $Y_{<c}=\{z\in Y^2(x,\phiv)\,:\,d(z)<\D-c\}$.
Since $\{x,y\}\cup Y(x,\phiv)$ is elementary and $Y^2(x,\phiv)\subseteq Y(x,\phiv)$, we have
$\D-d(x)+1+\D-d(y)+1+c|Y_{<c}|< \sum_{v\in \{x,y\}\cup Y(x,\phiv)}\phibar(v)\le \D$.
Thus $|Y_{<c}|<\frac{d(x)-2}{c}$.
By Claim \ref{claim 4}, we have $|Y^2(x,\phiv)-Y_{<c}|> \D-2d(x)+3-\frac{d(x)-2}{c}$, that is, for each $y\in N(X_1)$ we have $d_{Z_1(c)}(y)\ge \D-2d(x)+3-\frac{d(x)-2}{c}$.
Hence,
$$(\D-2d(x)+3-\frac{d(x)-2}{c})|N(X_1)|\le |E(N(X_1),Z_1(c))|\le \D|Z_1(c)|,$$
where $E(N(X_1),Z_1(c))$ are the edges with one vertex in $N(X_1)$ and the other endvertex in $Z_1(c)$.
Since $d(x)\le 3q-2\D$, solving the above inequalities we have
$$|Z_1(c)|\geq \frac{(5c+2)\Delta-(6c+3)q+3c+2}{c\Delta}|N(X_1)|.$$
\end{proof}

By Claims \ref{claim 1} and \ref{claim 2}, we have
\[
M^\prime(x) \geq
\begin{cases}
 2+2(\D-q)\qquad x\in X_1,\\
 q\qquad\qquad\qquad\quad x\in V(G)-X_1.
 \end{cases}
 \]
And by the definitions of $Z_1(c)$ and $Z_2(c)$, we get the following two lower bounds of $\sum_{x\in V(G)}M^\prime(x)$.
\begin{eqnarray*}
  b_1 &=& (2+2(\Delta-q))|X_1|+q|N(X_1)|+(\Delta-c)|Z_1(c)|+(3q-2\Delta)|Z_2(c)| \\
  b_2 &=& (2+2(\Delta-q))|X_1|+(n-|X_1|)q
\end{eqnarray*}
We now divide into a few cases to estimate the lower bound of $\max\{b_1,b_2\}$.

First we consider the case $\Delta-q-c> 0$.
For fixed value $|X_1|+|N(X_1)|$, $|Z_1(c)|+|Z_2(c)|$ is a constant.
Since $\D-c> q>3q-2\D$, $\max\{(\Delta-c)|Z_1(c)|+(3q-2\Delta)|Z_2(c)|, q(|Z_1(c)|+|Z_2(c)|)\}$ takes minimum when
$(\Delta-c)|Z_1(c)|+(3q-2\Delta)|Z_2(c)|=q(|Z_1(c)|+|Z_2(c)|)$, that is $|Z_2(c)|= (\frac{1}{2}-\frac{c}{2\Delta-2q})|Z_1(c)|$.
So by Claims \ref{claim 3} and \ref{claim 5}, $|Z_1(c)|+|Z_2(c)|\ge (3-\frac{c}{\Delta-q})\frac{(5c+2)\Delta-(6c+3)q+3c+2}{c\Delta}|X_1|$.
Let $f(c)=(3-\frac{c}{\Delta-q})\frac{(5c+2)\Delta-(6c+3)q+3c+2}{c\Delta}$.
So $n = |X_1|+|N(X_1)|+|Z_1(c)|+|Z_2(c)|\ge (3+f(c))|X_1|$.
Hence,
$\sum_{x\in V(G)}M^\prime(x)\geq \max\{b_1,b_2\}\ge qn+(2+2\Delta-3q)|X_1|
\ge (q+\frac{2+2\D-3q}{3+f(c)})n.$
So $\bar{d}(G)\ge q+\frac{2+2\D-3q}{3+f(c)}$.

Let $q^*=\frac{2\sqrt{2}\Delta}{2\sqrt{2}+1}$ and $a=1+\frac{1}{2\sqrt{2}+1}$ if $q=\frac{2\sqrt{2}(\Delta-1)-2}{2\sqrt{2}+1}$, and $q^*=\frac{3\D}{4}$, $a=2$ if $q=\frac{3}{4}\D-2$. So $q=q^*-a$ and we have
\begin{eqnarray*}
\frac{2+2\D-3q}{3+f(c)}&=&\frac{c\D(2+2\D-3q)}{3c\D+(3-\frac{c}{\D-q})((5c+2)\Delta-(6c+3)q+3c+2)}\\
&=&\frac{(2c\D-3cq^*)\D+f_1f_2\D-f_1f_2\D+2c\D+3ca\D}{(18c+6)\D-(18c+9)q^*+f_2}\\
&=& f_1\D+\frac{-f_1f_2\D+2c\D+3ca\D}{(18c+6)\D-(18c+9)q^*+f_2},
\end{eqnarray*}
where $f_1=\frac{2c\D-3cq^*}{(18c+6)\D-(18c+9)q^*}$ and $f_2=9c+6+(18c+9)a-\frac{(5c^2+2c)\Delta-(6c^2+3c)q+3c^2+2c}{\D-q}$.

Clearly, $f_1=\frac{2c\D-3cq^*}{(18c+6)\D-(18c+9)q^*}=\frac{2\D-3q^*}{18\D-18q^*+\frac{6\D-9q^*}{c}}$ is an increasing function of $c$.
To make $f_1$ as large as possible when $\D\ge l$, where $l$ is a positive integer,
we choose $c$ such that $c=\min\{\lfloor\D-q\rfloor\,:\,\D\ge l\}$.
If $l$ is large enough, $c$ is large too and we can see that the value of $f_1$ will approximate to $\frac{2\D-3q^*}{18\D-18q^*}$ and $\bar{d}(G)$ will approximate to $0.69277\D$.
Note that $q=\frac{2\sqrt{2}(\Delta-1)-2}{2\sqrt{2}+1}$ if $\D\ge 66$. Let $l=66$.
Then we have $c=18$. So we have $\D-q-c> 0$ if $\D\ge 65$.
Plugging $c=18$ and the value of $q^*$ into $f_2$ and $\frac{-f_1f_2\D+2c\D+3ca\D}{(18c+6)\D-(18c+9)q^*+f_2}$, we have $\frac{-f_1f_2\D+2c\D+3ca\D}{(18c+6)\D-(18c+9)q^*+f_2}>0$ and $f_2<0$. Thus
\begin{equation}\label{eqf}
  \frac{2+2\D-3q}{3+f(c)}\ge f_1\D+\frac{2c\D+3ca\D-f_2f_1\D}{(18c+6)\D-(18c+9)q^*}.
\end{equation}
If $\D\ge 66$, then $q=\frac{2\sqrt{2}(\Delta-1)-2}{2\sqrt{2}+1}$. Plugging $c=18$ into the inequality (\ref{eqf}), we get
$f_1\ge -0.04638$, $f_2\ge -244.43905$ and
\begin{eqnarray*}
\frac{2+2\D-3q}{3+f(c)}&\ge& -0.04638\D+1.10463.
 \end{eqnarray*}
Thus
$$\bar{d}(G)\ge q-0.04638\D+1.10463\ge 0.69241\D-0.15658.$$
If $\D=65$, then $q=\frac{3}{4}\D-2$. Plugging $c=18$ and $\D=65$ into the inequality (\ref{eqf}), we get $f_1\ge -0.05608$, $f_2\ge -1.15069$ and
$\frac{2+2\D-3q}{3+f(c)}\ge -0.05608\D+1.79358$. It follows that
$$\bar{d}(G)\ge q-0.05608\D+1.79358\ge 0.69392\D-0.20642.$$

Now we consider the case $\D-q-c\le0$. It is easy to see that $b_2>b_1$ and $\D\le 64$. Thus
$\sum_{x\in V(G)}M^\prime(x)\ge qn-(3q-2\D-2)|X_1|$. So $\bar{d}(G)\ge q-(3q-2\D-2)\frac{|X_1|}{n}$.
By Claims \ref{claim 3} and \ref{claim 5}, we have
\begin{equation*}
n\ge |X_1|+|N(X_1)|+|Z_1(c)|\geq (3+f^\prime(c))|X_1|,
\end{equation*}
where $f^\prime(c)=2\frac{(5c+2)\Delta-(6c+3)q+3c+2}{c\Delta}$.

Plugging $c=18$ and $q=\frac{3}{4}\D-2$ into $f^\prime(c)$, we have $f^\prime(c)=\frac{8.75\D+278}{9\D}$. Since
$\frac{|X_1|}{n}\le \frac{1}{3+f^\prime(c)}$,
we have $(3q-2\D-2)\frac{|X_1|}{n}\le \frac{2.25\D^2-72\D}{35.75\D+278}=\frac{9\D}{143}+\frac{695.94484}{35.75\D+278}-2.50339$,
thus $\bar{d}(G)\geq  \frac{393\D}{572}-\frac{695.94484}{35.75\D+278}+0.50339$. It is easy to check that if $\D\ge 56$ then $\bar{d}(G)\ge \frac{2}{3}(\D+2)$, which improve Woodall's result in \cite{w2}. If $\D\ge 56$, we have
$$\bar{d}(G)\ge  0.68706\D+0.19815.$$

Hence,
\[
\bar{d}(G) \geq
\begin{cases}
 0.69241\D-0.15658 \quad\,\: \mbox{ if } \Delta\geq 66, \\
 0.69392\D-0.20642\quad\;\,\mbox{ if } \Delta=65, \mbox{ and  } \\
 0.68706\D+0.19815\quad\! \quad\mbox{if } 56\leq \Delta\leq64.
 \end{cases}
 \]
This completes the proof of Theorem \ref{main}.

\end{document}